\newtheorem{theorem}{Theorem}[section]
\newtheorem{lemma}[theorem]{Lemma}
\newtheorem{proposition}[theorem]{Proposition}
\newtheorem{remark}[theorem]{Remark}
\numberwithin{equation}{section} \numberwithin{table}{section}
\newcommand{\bbf}{\bm{f}}
\newcommand{\bx}{\bm{x}}
\newcommand{\by}{\bm{y}}
\newcommand{\bu}{\bm{u}}
\newcommand{\bv}{\bm{v}}
\newcommand{\bw}{\bm{w}}
\newcommand{\be}{\bm{e}}
\newcommand{\one}{\bm{1}}
\newcommand{\R}{\mathbb{R}}
\def\Re{I\!\!R}
\newcommand{\VV}{\mathcal{V}}
\newcommand{\EE}{\mathcal{E}}
\newcommand{\MM}{\mathcal{M}}
\newcommand{\GG}{\mathcal{G}}
\newcommand{\abs}[1]{\lvert#1\rvert}
\begin{document}

\title[Multilevel preconditioning based on matching of
graphs]{Algebraic multilevel preconditioners for the graph {L}aplacian
  based on matching in graphs}
\thanks{This work supported in part by the National Science
Foundation, DMS-0810982, OCI-0749202 and by the Austrian Science Fund, Grants P19170-N18 and P22989-N18.}

\author{J.~Brannick}
\address{Department of Mathematics, The Pennsylvania State University,
University Park, PA 16802, USA}
\email{brannick@psu.edu}

\author{Y.~Chen}
\address{Department of Mathematics, The Pennsylvania State University,
University Park, PA 16802, USA}
\email{chen\_y@math.psu.edu}
 
\author{J.~Kraus}
\address{Johann Radon Institute, Austrian Academy of Sciences,
Altenberger Str.~69, 4040 Linz, Austria}
\email{johannes.kraus@oeaw.ac.at}
 
\author{L.~Zikatanov}
\address{Department of Mathematics, The Pennsylvania State University,
University Park, PA 16802, USA}
\email{ludmil@psu.edu}
 
\date{Today is \today}
 
\subjclass{65N30, 65N15}

\begin{abstract}
This paper presents estimates of the convergence rate and complexity of an 
algebraic multilevel preconditioner based on piecewise constant coarse vector spaces 
applied to the graph Laplacian. 
A bound is derived on the energy norm of the projection operator 
onto any piecewise constant vector space, 
which results in
an estimate of the two-level convergence rate 
where the coarse level graph is obtained by matching.  
The two-level convergence of the method is then used to establish 
the convergence of an Algebraic Multilevel Iteration that uses the two-level scheme recursively.
On structured grids, the method is proven to have 
convergence rate $\approx (1-1/\log n)$ 
and $O(n\log n)$ complexity for each cycle, where $n$ denotes the number of unknowns
in the given problem. 
Numerical results of the algorithm applied to various 
graph Laplacians are reported.
It is also shown that all the theoretical estimates derived for matching can be generalized to the case of
 aggregates containing more than two vertices.  
\end{abstract}

\maketitle

\section{Introduction}
Algebraic Multigrid (AMG) attempts to mimic the main components of Geometric Multigrid
in an algebraic fashion, that is, by using information from the coefficient
matrix only to construct the multilevel solver.  The basic algorithm uses a setup phase to construct a nested sequence of coarse spaces
that are then used in the solve phase to compute the solution.  
The two main approaches to the AMG setup algorithm are classical AMG~\cite{geo,BMR83} and (smoothed) aggregation AMG~\cite{SA_61,MV92,MBrezina_etal_1999a,PVanek_JMandel_MBrezina_1995a,2003KimH_XuJ_ZikatanovL-aa}, which are distinguished by the type of coarse variables used in the construction of AMG interpolation.  

In the classical AMG algorithm, the coarse variables are chosen using a coloring algorithm which is designed to find a suitable maximal independent subset 
of the fine variables.   Then, given the coarse degrees of freedom, a row of interpolation is constructed for each fine point from its neighboring coarse points.  In contrast, the aggregation-based AMG setup algorithm partitions the fine variables into disjoint subdomains, called aggregates.  Then, a column (or several columns as in~\cite{PVanek_JMandel_MBrezina_1995a}) of interpolation is associated to each aggregate, which has nonzero entries only for the unknowns belonging to this aggregate.  The focus of this paper is on the development and, in particular, the analysis of the latter aggregation-type methods.

The idea of aggregating unknowns to coarsen a system of discretized partial differential equations dates back to work by Leont'ev in 1959~\cite{Leo59}.
Simon and Ando developed a related technique for aggregating dynamic systems in 1961 \cite{SA_61} and 
a two-grid aggregation-based scheme was considered in the context of solving Markov chain systems by Takahashi in 1975~\cite{YT_1975}.
Aggregation-based methods have been studied extensively since and numerous algorithms and theoretical results have followed~\cite{MBrezina_etal_1999a,PVanek_JMandel_MBrezina_1995a,2003KimH_XuJ_ZikatanovL-aa}.   Van\'{e}k introduced an extension of these methods known as smoothed aggregation multigrid in which smoothing steps
are applied to the columns of the aggregation-based interpolation operator to accelerate two-level convergence and a modification of this two-level algorithm with overcorrection is presented in~\cite{MV92}.  
A multilevel smoothed aggregation algorithm and its convergence analysis are found in~\cite{PVanek_MBrezina_JMandel_2001a} and, in~\cite{panayot-book}, an improved convergence theory of the method is presented.  The latter theory is then extended to allow for aggressive coarsening, provided an appropriate polynomial smoother is used~\cite{BVV_2011}.  A further generalization known as adaptive smoothed  aggregation is developed in~\cite{MBrezina_etal_2004a}.
Variants of the above approaches continue to be developed for use in scientific computing and have been developed for higher order partial differential equations \cite{PVanek_JMandel_MBrezina_1995a},  convection diffusion problems~\cite{2003KimH_XuJ_ZikatanovL-aa}, Markov chains \cite{HDeSterck_08,brannick_markov_2011}, and the Dirac equation in quantum chromodynamics~\cite{brannick_clark_brower_osborne_rebbi_2008}.

In this paper, an aggregation based Algebraic Multigrid method for the graph Laplacian is presented. 
The approach constructs the sequence of coarse graphs recursively using a
pair-wise aggregation, or matching, form of interpolation.  
However, 
it is demonstrated here, that the convergence rate of a two-level method based on such a construction
is uniformly bounded for the graph Laplacian on general graphs and, thus, can be used within an Algebraic 
Multilevel Iteration (AMLI) \cite{amli-1, panayot-book}  
as a preconditioner to the Conjugate Gradient iteration to obtain a nearly optimal solver. 
A noteworthy feature of the approach is its simplicity, which makes it possible to analyze the
convergence and complexity of the method with few assumptions and without any geometric 
information. 

The remainder of the paper is organized as follows. 
In Section \ref{problem-description}, we introduce the graph Laplacian problem
and discuss some of its applications. 
In Section \ref{matching}, we introduce a graph matching algorithm and demonstrate 
that the energy norm of the $\ell_{2}$ projection onto the coarse space is a key quantity
in deriving convergence and complexity estimates of the method.  Additionally,  
we introduce an approach 
computing an approximation of the energy norm of this projection operator. 
In Section \ref{twolevel}, we present an analysis on the two-level method for
the graph Laplacian operator. 
In Section \ref{amli}, we consider the convergence and complexity of the resulting AMLI method, 
and in Section \ref{numerics} we provide numerical results and address some 
practical issues of the method.  

\section{Problem formulation and notation}\label{problem-description}

Consider an unweighted connected graph $\GG=(\VV,\EE)$, 
where $\VV$ denotes the set of vertices
and $\EE$ denotes the set of edges of $\GG$. 
The variational problem considered here is as follows: 
Given an $\bbf$ satisfying $(\bbf,\one)=0$, 
where $\one$ is a constant vector, 
find a $\bu\in \R^{n}$, 
where $n=\abs{\VV}$ denotes the cardinality of the set of vertices $\VV$, 
such that  

\begin{eqnarray}\label{varprob}
(A \bu, \bv) = (\bbf, \bv)
, \quad
\forall \bv \in \R^{n},
\end{eqnarray}
where
\begin{eqnarray}
(A \bu, \bv) 
=
\sum_{k=(i,j)\in\EE} (\bu_{i}-\bu_{j})(\bv_{i}-\bv_{j}) 
, \qquad
(\bbf, \bv)
=
\sum_{i\in \VV} \bbf_{i}\bv_{i}
\text{ , }
(\bbf, \one)
=
\sum_{i\in \VV} \bbf_{i}
.
\end{eqnarray}
Define the discrete gradient operator 
$B: \R^{|\VV|} \mapsto \R^{|\EE|}$ 
such that 
\begin{eqnarray*}
(B\bu)_{k} 
=\bu_{i}-\bu_{j}
,\quad 
k=(i,j)\in \EE
,\quad 
i<j ,
\end{eqnarray*}
where $e_{i}$ and $e_{j}$ are standard Euclidean bases. 
The operator $A$ is named graph {\em Laplacian} since 
\begin{eqnarray*}
A=B^{T}B. 
\end{eqnarray*}

The operator $A$ is symmetric and positive semi-definite
and its kernel is the space spanned by the constant vector. 
These properties can also be verified by the matrix form of $A$ 
defined in the following way
\begin{eqnarray*}
(A)_{ij}=
\left\{
\begin{array}{ll}
d_{i} \qquad & i=j; \\
-1  & i\neq j, (i,j)\in \EE; \\
0  & i\neq j, (i,j)\notin \EE ;
\end{array}
\right.
\end{eqnarray*}
where $d_{i}$ is the degree of the $i$-th vertex. 

Efficient multilevel graph Laplacian solvers 
are important in numerous application areas, including finite element and finite difference
discretizations of elliptic partial differential equations (PDE), data mining, clustering in images, and 
as a preconditioner for weighted graph Laplacians. 
Moreover, 
the theory developed here for multilevel aggregation solvers applied to graph Laplacians should provide 
insights on how to design a solver for more general weighted graph Laplacians, which cover also
anisotropic diffusion problems. 
Two generalizations of the graph Laplacian systems are as follows.
\begin{itemize}

\item
{\em Weighted graph Laplacians: } 
Assume that the graph is weighted and 
the $k$-th edge is assigned a weight $w_{k}$, 
then the corresponding bilinear form of $A$ is 
\begin{eqnarray*}
(A \bu, \bv) 
=
\sum_{k=(i,j)\in\EE} w_{k}(\bu_{i}-\bu_{j})(\bv_{i}-\bv_{j}) .
\end{eqnarray*}
Define $D: \R^{|\EE|} \mapsto \R^{|\EE|}$ as 
a diagonal matrix whose $k$-th diagonal entry is equal to $w_{k}$, 
then the matrix $A$ can be decomposed as 
\begin{eqnarray*}
A=B^{T}DB .
\end{eqnarray*}
Finite element and finite difference discretizations of 
elliptic PDEs with Neumann boundary conditions results in such weighted graph 
Laplacians. 

\item
{\em Positive definite matrices: }
Assume that $A$ is defined in terms of the bilinear form 
\begin{eqnarray*}
(A \bu, \bv) 
=
\sum_{k=(i,j)\in\EE} (\bu_{i}-\bu_{j})(\bv_{i}-\bv_{j}) + 
\sum_{i\in\VV}\bu_{i}\bv_{i}
=(A_{s}\bu, \bv)+(A_{t}\bu, \bv).
\end{eqnarray*}
By introducing a Lagrange multiplier
$y$, 
the system $A\bu=\bbf$ can be rewritten as an augmented linear system 
\begin{eqnarray*}
\begin{pmatrix}
A_{s}+A_{t} & -A_{t}\one \\ -\one^{T}A_{t} & \one^{T}A_{t}\one
\end{pmatrix}
\begin{pmatrix}
\bu \\ y
\end{pmatrix}
=
\begin{pmatrix}
\bbf \\ -\one^{T}\bbf
\end{pmatrix} .
\end{eqnarray*}
These graph Laplacians with lower order terms are similar to 
discretized PDE with Dirichlet boundary conditions. 
A solution for this augmented linear system directly results to 
the solution of $A\bu=\bbf$.
\end{itemize}

The present paper focuses on
designing a multilevel preconditioner that is constructed by applying
recursively a space decomposition based on graph matching. 
The aim is to analyze the matching AMLI solver for the graph Laplacian in detail as a first step in gaining 
an in-depth understanding of a multilevel solver elliptic PDEs. 
The extension of the proposed algorithm
to general graph problems is also a subject of current 
research.

\section{Space decomposition based on matching}\label{matching}
In this section, an outline of the basic idea of matching is provided, a commutative 
diagram which can be used to estimate the energy norm of the $\ell_2$ projection
onto the piece-wise constant coarse vector space resulting from a matching in a graph is given, 
and some auxiliary results that are needed later on in the convergence analysis are discussed.

\subsection{Subspaces by graph partitioning and graph matching}

A graph partitioning of $\GG=(\VV,\EE)$ is a set of subgraphs 
$\GG_{i}=(\VV_{i},\EE_{j})$ such that 
\begin{eqnarray*}
\cup_{i}\VV_{i}=\VV
, \quad
\VV_{i}\cap\VV_{j} =\emptyset
, \quad
i\neq j.
\end{eqnarray*}
In this paper, all subgraphs are assumed to be non empty and connected. 
The simplest non trivial example of such a graph partitioning is a matching, i.e, 
 a collection (subset $\MM$) of edges in $\EE$ such that no two
edges in $\MM$ are incident. 

For a given graph partitioning, subspaces of $V=\R^{|\VV|}$ are defined as
\begin{equation*}
V_{c}=\{\bv \in V | \ \bv=\text{ constant on each $\VV_{i}$ } \} .
\end{equation*}
Note that each vertex in $\GG$ corresponds to a connected subgraph $S$ of $\GG$
and every vertex of $\GG$ belongs to exactly one such component. The vectors from
$V_c$ are constants on these connected subgraphs. 
Of importance is the $\ell_2$ orthogonal projection on
$V_c$, which is denoted by $Q$, and defined as follows:
\begin{equation}\label{Q}
(Q \bv)_i = \frac{1}{|\VV_{k}|}\sum_{j\in \VV_{k}} \bv_j 
,\quad
\forall i \in \VV_{k}. 
\end{equation}

Given a graph partitioning, 
the coarse graph $\GG_{c}=\{\VV_{c},\EE_{c}\}$ is defined by 
assuming that all vertices in a subgraph form an equivalence class, 
and that $\VV_{c}$ and $\EE_{c}$ are the quotient set of $\VV$ and $\EE$ 
under this equivalence relation. 
That is, any vertex in $\VV_{c}$ corresponds to a subgraph in 
the partitioning of $\GG$, 
and the edge $(i,j)$ exists in $\EE_{c}$ if and only if 
the $i$-th and $j$-th subgraphs are connected in the graph $\GG$. 
Figure \ref{figure_1} is an example of matching of a graph and 
the resulting coarse graph. 

\begin{figure}[h]
\includegraphics[page=1,scale=0.75]{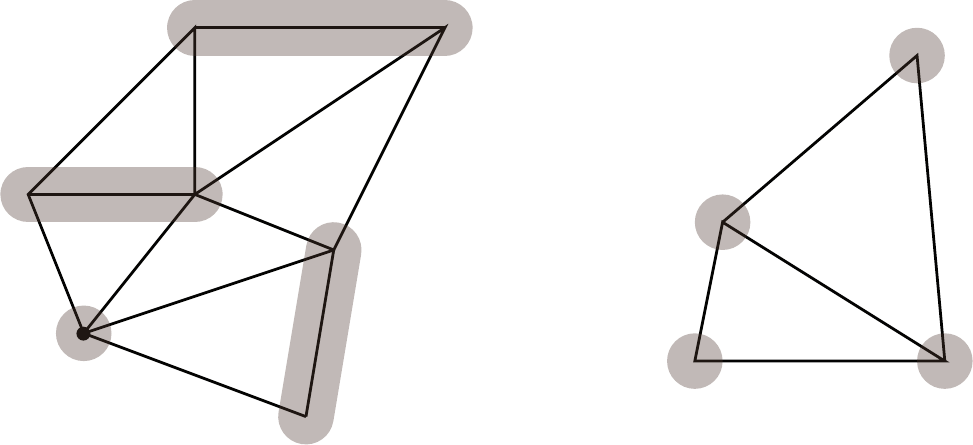}
\caption{Matching $\MM$ on a graph $\GG$ (left) and 
the coarse graph $\GG_c$ (right)}\label{figure_1}
\end{figure}


As mentioned above, the reason to focus on matching is that it simplifies 
the computation of several key quantities used in the upcoming estimates 
derived for a perfect matching and  
it is possible to show that a matching which is not perfect can be analyzed in 
a similar way.

\subsection{Commutative diagram}

Let $B$ be the discrete gradient of a graph Laplacian $A$, as defined in  \eqref{varprob}, 
and $Q$ be defined as in \eqref{Q}.  Assume
that there exists an operator $\Pi_k$ such that 
the following commutative  diagram holds true:
\[
\begin{CD}
{\R^{\abs{\VV}}} @> {B} >> {\R^{\abs{\EE}}} \\
@ V{Q} VV @ VV {\Pi} V \\
{V_c} @>> {B} > {\R^{\abs{\EE}}} \\
\end{CD}
\] 
The proof of this assumption is provided later on.
From the commutative relation $B Q = \Pi B$ it follows that  
\begin{equation}\label{A_norm_of_Q_k_v}
|Q \bv|_A^2  
=
\|BQ \bv\|^2
=
\| \Pi B \bv \|^2 
\le
\|\Pi \|^2 |\bv|_A^2 .
\end{equation}
Thus, 
an estimate on the $A$-semi-norm of $Q$ amounts to 
an estimate of the $\ell_2$ norm of $\Pi$.
In the next subsection, an explicit form of $\Pi$ is constructed and an 
estimate of its $\ell_2$ norm is derived.

\begin{remark}
A more general approach for weighted graph Laplacians 
is to assume that the weight matrix $D\neq I$, 
therefore the bound on the norm $|Q|_{A}$ becomes 
$$
|Q \bv|_A^2  
=
(DBQ \bv,BQ \bv)
=
(D\Pi B \bv,\Pi B \bv)
\le
\|D^{1/2}\Pi_kD^{-1/2}\|^2  |\bv|_A^2 , 
$$
where $D$ can have some negative weights, which results in a matrix $D^{1/2}\Pi_k D^{-1/2}$ this is complex valued. 
A detailed analysis in such a setting and the application of this idea to anisotropic
diffusion problems are discussed in \cite{commutative}. 
\end{remark}

\subsection{Construction of $\Pi$ in case of piece-wise constant spaces}
Here, we proceed with an explicit construction and $\ell_2$ norm estimate of the operator $\Pi$.

For any graph partitioning in which the subgraphs are connected, 
a given edge belongs to the set of ``internal edges'', 
whose vertices belong to the same subgraph, 
or to the set of ``external edges'', 
whose vertices belong to two distinct subgraphs. 
For example, 
let $\GG_{1}$ and $\GG_{2}$ denote the subgraphs 1 and 2 
in Fig. \ref{figure_2}, 
then 
$k_{1}$ is an internal edge and $k_{2}$ is an external edge. 

\begin{figure}[h]
\caption{Connected components and the construction of $\Pi_k$}\label{figure_2}
\hspace{10pt}
\begin{center}
\includegraphics*[page=2,scale=0.75]{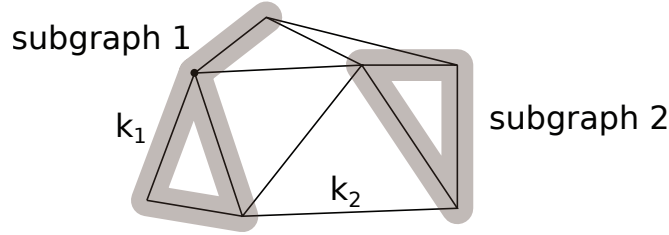}
\end{center}
\end{figure}

Since the vector $Q\bv$ has the same value on the two endpoints of 
the edge $k_{1}$, 
we have that $(BQ \bv)_{k_{1}}=0$. 
Accordingly, all entries in $(\Pi)_{k_{1}}$, 
the $k_{1}$-th row of $\Pi$, are set to zero: 
\begin{eqnarray*}
(\Pi B\bv)_{k_{1}} = (\Pi)_{k_{1}} B\bv = 0 .
\end{eqnarray*}
For the external edge $k_{2}$, 
it follows that $(\Pi)_{k_{2}}$ 
satisfies 
\begin{equation}\label{pi_k_2}
(\Pi)_{k_{2}}(B \bv) 
= 
(BQ \bv)_{k_{2}} = 
\frac{1}{|\VV_{1}|}\sum_{i_1 \in \VV_{1}} \bv_{i_1} - 
\frac{1}{|\VV_{2}|}\sum_{i_2 \in \VV_{2}} \bv_{i_2},
\end{equation}
for every $\bv$.
The following Lemma is useful in computing explicitly the entries of $(\Pi)_{k_{2}}$.
\begin{lemma}
Let $A: \R^{n}\mapsto \R^{n}$ be a positive semidefinite operator 
and let $\{\chi_i\}_{i=1}^n$ be a basis in $\R^{n}$. 
Assume that 
the null space of $A$ is one dimensional, 
namely there exist a nonzero vector $s$ such that 
$\operatorname{Ker}(A)=\operatorname{span}(s)$, 
and 
for every integer $1\le i \le n$ we have $(\chi_i,s)=1$. 
We then have: 
\renewcommand{\labelenumi}{(\roman{enumi})}
\renewcommand{\theenumi}{(\roman{enumi})}
\begin{enumerate}
\item 
\label{discrete-taylor-lemma-1}
For any $i$, the operator $\widetilde{A}:\R^{n}\mapsto \R^{n}$ with  
$\widetilde{A}u=(A u+(\chi_i,u)\chi_i)$ is invertible. 
\item
\label{discrete-taylor-lemma-2}
The following identity holds for all $u\in \R^{n}$:
$$
\frac{1}{(s,s)}(u,s)- (u,\chi_i) = \frac1{(s,s)} (\widetilde{A}^{-1} s,A u).
$$
\end{enumerate}
\renewcommand{\labelenumi}{\arabic{enumi}.}
\renewcommand{\theenumi}{\arabic{enumi}.}
\end{lemma}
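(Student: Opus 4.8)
The plan is to treat the two parts separately, with (i) a short positivity argument and (ii) resting on a single algebraic observation.

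For part (i), since $\widetilde A$ acts on the finite-dimensional space $\R^{n}$, I would reduce invertibility to injectivity. The first step is to note that $\widetilde A$ is symmetric and positive semidefinite: for any $u$,
\[
(\widetilde A u, u) = (Au,u) + (\chi_i,u)^2 \ge 0 ,
\]
because $A$ is positive semidefinite. If $\widetilde A u = 0$, then pairing with $u$ forces both nonnegative terms to vanish, giving $(Au,u)=0$ and $(\chi_i,u)=0$. Positive semidefiniteness of $A$ then yields $Au=0$, i.e. $u\in\operatorname{Ker}(A)=\operatorname{span}(s)$, so $u = c\,s$ for some scalar $c$; the normalization $(\chi_i,s)=1$ gives $0=(\chi_i,u)=c$, whence $u=0$. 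Thus $\widetilde A$ is injective, and therefore invertible (equivalently, $(\widetilde A u,u)>0$ for $u\neq 0$, so $\widetilde A$ is positive definite).

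For part (ii), the key step I would isolate up front is the identity
\[
\widetilde A s = A s + (\chi_i,s)\chi_i = \chi_i ,
\]
which combines $As=0$ with $(\chi_i,s)=1$. Inverting gives $\widetilde A^{-1}\chi_i = s$, and by symmetry of $\widetilde A^{-1}$,
\[
(\chi_i, \widetilde A^{-1} s) = (\widetilde A^{-1}\chi_i, s) = (s,s) .
\]
With these in hand, set $g=\widetilde A^{-1}s$; the defining relation $\widetilde A g = s$ reads $Ag + (\chi_i,g)\chi_i = s$, so that $Ag = s - (\chi_i,g)\chi_i = s - (s,s)\chi_i$. Using symmetry of $A$,
\[
(\widetilde A^{-1}s, Au) = (g,Au) = (Ag,u) = (s,u) - (s,s)(\chi_i,u) ,
\]
and dividing by $(s,s)$ reproduces $\tfrac{1}{(s,s)}(u,s)-(u,\chi_i)$ exactly, which is the claimed identity.

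I expect the only nonroutine part to be recognizing the relation $\widetilde A s = \chi_i$: the rank-one correction $u\mapsto(\chi_i,u)\chi_i$ is tailored precisely so that $\widetilde A$ carries the kernel direction $s$ onto $\chi_i$. Once that is seen, both parts reduce to two-line computations, and there is no analytic obstacle — everything is finite-dimensional linear algebra driven by the two normalizations $As=0$ and $(\chi_i,s)=1$.
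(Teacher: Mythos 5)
Your proof is correct and follows essentially the same route as the paper's: part (i) is the identical injectivity-plus-kernel argument, and part (ii) rests on the same key identity $\widetilde{A}s=\chi_i$ (equivalently $\widetilde{A}^{-1}\chi_i=s$) combined with symmetry. The only cosmetic difference is that you expand $\widetilde{A}g=s$ for $g=\widetilde{A}^{-1}s$ and move $A$ across the inner product, whereas the paper adds and subtracts $(\chi_i,u)\chi_i$ to reconstitute $\widetilde{A}u$ inside the pairing; the two computations are rearrangements of one another.
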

\begin{proof}
To establish~\ref{discrete-taylor-lemma-1} it suffices to show that
$\widetilde{A} v=0$ implies $v=0$. 
Assuming that $\widetilde{A} v=0$ for some $v\in \R^{n}$ it follows that:
$$
0=(\widetilde{A}v,v)=(Av,v) + (\chi_i,v)^2.
$$
Note that both terms on the right side of the above identity are
nonnegative and, hence, their sum can be zero if and only if both terms
are zero. Since $A$ is positive semidefinite by assumption with one dimensional
null space, from $(Av,v)=0$ we conclude that $v=\alpha s$ for some
$\alpha\in \R$. 
For the second term we have that 
$0=(\chi_i,v)^2=\alpha^2 (\chi_i,s) ^2$, and since 
$(\chi_i,s)\neq 0$ for all $i$, 
it follows that $\alpha=0$ and hence
$v=0$. This proves~\ref{discrete-taylor-lemma-1}. 

Now, applying \ref{discrete-taylor-lemma-1} the result \ref{discrete-taylor-lemma-2} follows:
\begin{eqnarray*}
\frac1{(s,s)}
(\widetilde{A}^{-1} s,A u) 
& = & 
\frac1{(s,s)}
(\widetilde{A}^{-1} s, Au +(\chi_i,u)\chi_i)
- 
\frac1{(s,s)}
(\widetilde{A}^{-1} s,(\chi_i,u)\chi_i)\\
& = & 
\frac1{(s,s)}
(\widetilde{A}^{-1} s, \widetilde{A}u)
-
\frac1{(s,s)}
(\chi_i,u)(s,\widetilde{A}^{-1} \chi_i)\\
& = & 
\frac1{(s,s)}(s, u)
-
\frac1{(s,s)}
(\chi_i,u)(s,\widetilde{A}^{-1} \chi_i) \\
&=&
\frac1{(s,s)}(u,s) - (u,\chi_{i}) .
\end{eqnarray*}
Here, the equality
$\widetilde A s = As+(\chi_{i},s)\chi_{i} = \chi_{i}$
was used, implying that 
$\widetilde A^{-1} \chi_{i} = s$ .
\end{proof}
\begin{remark}
A special case is given by taking $s=\one$
and $\chi_{i}=e_i$, which denote the standard Euclidean bases.  Then, 
it follows that
\begin{equation}\label{eq:id_spec}
u_i = \langle u \rangle - \frac{1}{n}((A+e_i e_i^T)^{-1}\one,Au)
\end{equation}
in which $\langle u\rangle := {\frac{1}{n}} \sum_{i=1}^n u_i$
denotes the average value of $u$. 
\end{remark}

Next, denote by $B_{m}$ the restriction
of $B$ to a subgraph $\GG_{m}$, 
and set $A_m:=B_m^T B_m$.  Then, 
$\bu_{l}$ can be expressed as 
the $l$-th component of $\bu$ for $\bu$ in \eqref{eq:Taylor}. 
\begin{equation}\label{eq:Taylor}
\bu_l
= 
\langle \bu \rangle_{m} 
+ 
\frac{1}{|\VV_{m}|}
(B_m(A_m + \be_l \be_l^T)^{-1} \one_{m}, B_m \mathcal{I}_{m} \bu) ,
\end{equation}
where $l=1\dots |\VV_{m}|$ are the local indices of the vertex set $\VV_{m}$, 
the operator $\langle \cdot \rangle_{m}$ and the term $\one_{m}$ 
are the averaging operator and the constant vector restricted on the 
subgraph $\GG_{m}$, 
and $\mathcal{I}_{m}: \R^{|\VV|} \mapsto \R^{|\VV_{m}|}$ maps 
the global edge indices to the local edge indices.

Applying this formula for $\GG_1$ and $\GG_2$, 
gives the row of the operator $\Pi$ on the edge $k_{2}$ 
that connects $\GG_1$ and $\GG_2$ as follows 
\begin{equation}\label{row_of_pk_formal}
(\Pi)_{k_{2}} 
= 
C_{1}^{T}\mathcal{I}_{1}^{T} 
+ 
e_{k_{2}}^{T}
-
C_{2}^{T} \mathcal{I}_{2}^{T} . 
\end{equation}
Here $C_{1}$ is given by 
\begin{eqnarray*}
C_{1}
=
\frac{1}{|\VV_{1}|} B_{1}(A_{1} + \be_i \be_i^T)^{-1}\one_{1} 
\end{eqnarray*}
which then makes the summation in \eqref{row_of_pk_formal} valid. 
The vector $C_{2}$ is defined in a similar way. 

Assume that the global indices of the vertices in $\GG_{1}$ and $\GG_{2}$ 
are ordered consecutively as 
decreasing integers starting at $k_{2}-1$ and 
increasing integers starting at $k_{2}+1$. 
Then, 
the $k_{2}$-th row of $\Pi$ can be expressed as 
\begin{equation}\label{row_of_pk}
(\Pi)_{k_{2}} 
= 
\left[
0, \dots, 0, 
C_{1}^{T}, 1, C_{2}^{T}, 
0, \dots, 0
\right]
\end{equation}
where the number $1$ is on the $k_{2}$-th position in this row of $\Pi$. 
Note that from \eqref{eq:Taylor} it follows that
the property \eqref{pi_k_2} holds for 
this construction of $\Pi$, 
since by definition of $Q$
\begin{eqnarray*}
(BQ \bv)_{k_{2}} 
&=& 
\langle \bv \rangle_{1} - \langle \bv \rangle_{2} \\
&=& 
\bv_{i} - \bv_{j} 
+ 
\frac{1}{|\VV_1|}(B_{1}(A_{1} + \be_i \be_i^T)^{-1} \one_{1}, B_{1} \bv_{1}) 
-
\frac{1}{|\VV_2|}(B_{2}(A_{2} + \be_j \be_j^T)^{-1} \one_{2}, B_{2} \bv_{2}) \\
&=&
(e_{k_{2}}, B\bv) + (C_{1},B_{1}\bv) + (C_{2},B_{2}\bv) \\
&=&
(\Pi)_{k_{2}} B\bv,
\end{eqnarray*}
where $k_{2}=(i,j)$, and 
$i$ and $j$, both in local indices, are the incident vertices of $k_{2}$.

\section{A two-level method}\label{twolevel}

In this section, the $\ell_{2}$-orthogonal projection given in 
\eqref{Q} based on a matching $\MM$ is proven to be stable assuming 
the maximum degree of the graph $\GG$ is bounded. 
Then, a two-level preconditioner is derived
and the condition number of the system preconditioned by this two-level method is proven to be
uniformly bounded (under the same assumption). 

\subsection{Two-level stability}

The construction of $\Pi$ for a matching $\MM$ proceeds as follows. 
First, note that all rows of $\Pi$ that correspond to an edge $k=(i,j) \in \MM$ 
are identically zero. 
On the other hand, if the edge $k=(i,j) \notin \MM$, then it is an external edge and, thus,
by \eqref{row_of_pk}, 
the $k$-th row of $\Pi$ is  
\begin{eqnarray*}
(\Pi)_{k}
&=& 
\bigg[
0,\dots,0, \quad
\frac12 (1,-1) 
\left( 
\begin{pmatrix} 1 & -1 \\ -1 & 1 \end{pmatrix} 
+
\begin{pmatrix} 0 \\ 1 \end{pmatrix}^T 
\begin{pmatrix} 0 \\ 1 \end{pmatrix}
\right)^{-1}
\begin{pmatrix} 1 \\ 1 \end{pmatrix}
, \\
&&
1, \quad
-\frac12 (1,-1) 
\left( 
\begin{pmatrix} 1 & -1 \\ -1 & 1 \end{pmatrix} 
+
\begin{pmatrix} 1 \\ 0 \end{pmatrix}^T 
\begin{pmatrix} 1 \\ 0 \end{pmatrix}
\right)^{-1}
\begin{pmatrix} 1 \\ 1 \end{pmatrix}
, \quad 
0,\dots,0 \bigg] \\
&=& 
\big[
0,\dots,0, 
\frac12, 1, -\frac12, 
0,\dots,0
\big] .
\end{eqnarray*}
Hence,
\begin{equation}\label{pk_match_rowwise}
(\Pi)_{kl} 
= 
\left\{
\begin{array}{ll}
1 & k \notin \MM \text{ and } l=k ; \\
\pm \frac12 \qquad & k\not\in \MM, l \in \MM \text{ and } l\cap k\neq \emptyset ; \\
0 & \text{elsewhere} . 
\end{array}\right.
\end{equation}
The alternative way of describing the entries in $\Pi$ is by showing that, 
\begin{equation}\label{pk_match_columnwise}
(\Pi)_{kl} 
= 
\left\{
\begin{array}{ll}
1 & l \notin \MM \text{ and } k=l ; \\
\pm \frac12 \qquad & l \in \MM, k \not\in \MM \text{ and } k \cap l\neq \emptyset ; \\
0 & \text{elsewhere} . 
\end{array}\right. 
\end{equation}

Formula \eqref{pk_match_rowwise} implies that, 
the $k$-th row of $\Pi$ can be a zero row if $k \in \MM$, 
or a row with 3 non-zero entries if $k \notin \MM$, 
which results to 
\begin{eqnarray*}
\|\Pi\|_{\infty} 
=
\max_{k} \sum_{l} |\Pi_{kl}|
=
1+|\pm 1/2|+|\pm 1/2|=2 .
\end{eqnarray*}

Formula \eqref{pk_match_columnwise} implies that, 
the $l$-th column of $\Pi$ can have exactly 1 non-zero entry if $l\notin \MM$, 
or $s$ non-zeros entries whose values are $\pm 1/2$ if $l\in \MM$. 
Here $s$ is the number of edges satisfying 
$k\notin \MM$ and $k\cap l \neq \emptyset$ for 
any given $l\in \MM$, 
thus is bounded by $2d-2$, 
where $d$ is the maximum degree of the graph, 
since an edges can have at most $2d-2$ neighboring edges. 
This leads to 
\begin{eqnarray*}
\|\Pi\|_{1}
=
\max_{l} \sum_{k} |\Pi_{kl}|
=
\max
\big(
1,
(2d-2)|\pm 1/2|
\big)
=
\max
\big(  1, d-1  \big) .
\end{eqnarray*}

On a graph whose maximal degree is larger or equal to 2, 
the estimates on the infinity norm and $\ell_{1}$ norm of $\Pi$ 
result to the following estimate on $\rho(\Pi\Pi^{T})$: 
\begin{eqnarray}\label{rho_pi_pi_t}
\rho(\Pi\Pi^{T}) 
=
\|\Pi\|_{2}^{2}
\leq 
\|\Pi\|_{1}
\|\Pi\|_{\infty}
=2d-2 .
\end{eqnarray}

%

\begin{remark} 
Applying Gerschgorin's theorem directly to the matrix $\Pi\Pi^{T}$ 
leads to a sharper estimate: 
$\rho (\Pi \Pi^{T}) \le d$. 
\end{remark}

%

Formula \eqref{rho_pi_pi_t} implies directly the following lemma. 
\begin{lemma}\label{lem_qa=2}
On any graph whose maximum degree is 2 (e.g. such graph is a path), 
the operator $\Pi$ defined in \eqref{pk_match_rowwise} satisfies 
$\Pi_{k}(B \bv) = (BQ \bv)_k$
and the following estimate holds
$$
|Q|_{A}^{2} \leq \|\Pi\|_{2}^2 \leq 2d-2 = 2 .
$$
\end{lemma}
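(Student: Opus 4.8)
The plan is to dispatch the two assertions separately, drawing both from facts already established above: the commutative identity $\Pi_k(B\bv)=(BQ\bv)_k$ comes from the construction of $\Pi$ in Section~\ref{matching}, while the bound on $|Q|_A$ is an immediate consequence of \eqref{A_norm_of_Q_k_v} combined with \eqref{rho_pi_pi_t} evaluated at $d=2$.

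For the commutative identity I would proceed edge by edge. If $k\in\MM$, then $k$ is an internal edge of a matched pair; its two endpoints lie in the same block, so $Q\bv$ is constant across them and $(BQ\bv)_k=0$, while row $k$ of $\Pi$ is identically zero by \eqref{pk_match_rowwise}, so both sides vanish. If $k\notin\MM$, then $k$ is an external edge joining two distinct blocks $\GG_1$ and $\GG_2$, each of size at most two, and \eqref{pk_match_rowwise} is exactly the specialization of the general row \eqref{row_of_pk}: the coefficient $C_m$ is obtained from the local $2\times2$ graph Laplacian when $\GG_m$ is a matched pair and vanishes when $\GG_m$ is a singleton. The verification carried out at the close of Section~\ref{matching} then gives $(\Pi)_k B\bv=\langle\bv\rangle_1-\langle\bv\rangle_2=(BQ\bv)_k$. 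Ranging over all $k$ yields $BQ=\Pi B$.

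With the identity in hand the norm estimate is routine. Inequality \eqref{A_norm_of_Q_k_v} gives $|Q\bv|_A^2=\|\Pi B\bv\|^2\le\|\Pi\|_2^2\,|\bv|_A^2$ for every $\bv$, hence $|Q|_A^2\le\|\Pi\|_2^2$. Bounding the spectral norm through \eqref{rho_pi_pi_t} yields $\|\Pi\|_2^2\le 2d-2$, and setting $d=2$ collapses the right-hand side to $2d-2=2$, which closes the chain $|Q|_A^2\le\|\Pi\|_2^2\le 2$.

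I do not expect a genuine obstacle, since the result is advertised as a direct corollary of \eqref{rho_pi_pi_t}; the only point needing care is the constant bookkeeping when $d=2$. By \eqref{pk_match_rowwise} the largest absolute row sum is $|1|+|\tfrac12|+|\tfrac12|=2$, so $\|\Pi\|_\infty=2$; by \eqref{pk_match_columnwise} each column indexed by a matched edge meets at most $2d-2=2$ external edges, each contributing $\pm\tfrac12$, so $\|\Pi\|_1=\max(1,d-1)=1$; the product $\|\Pi\|_1\|\Pi\|_\infty$ then matches $2d-2$ exactly at $d=2$. I would also flag the singleton-block case in the commutative step, where $B_m$ has no rows and the correction $C_m$ drops out, so that the argument degenerates without incident.
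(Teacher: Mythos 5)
Your proposal is correct and follows essentially the same route as the paper: the paper deduces the lemma directly from the commutative relation \eqref{A_norm_of_Q_k_v} (whose row-by-row verification you spell out, including the internal-edge and singleton cases) together with the norm bound \eqref{rho_pi_pi_t} evaluated at $d=2$. Your constant bookkeeping ($\|\Pi\|_\infty=2$, $\|\Pi\|_1=\max(1,d-1)=1$ at $d=2$, hence $\|\Pi\|_2^2\le 2$) matches the paper's derivation exactly.
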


Numerical tests show that this is a sharp estimate on the semi-norm $|Q|_{A}$ 
and that leads to fast convergent and reliable AMG methods. 


\subsection{A two-level preconditioner}\label{sec:two_lev}
Here, using an estimate of the stability of the matching projection (i.e, the norm $|Q|_{A}$, where
$Q$ is defined via the matching) two-level convergence is established.
Assume that for a graph Laplacian $A: \R^{n} \mapsto \R^{n}$ a perfect matching
is given and consider 
the $n\times n/2$ matrix $P$
whose $k$-th column is given by
\begin{equation}\label{definitionp}
(P)_{k}=\be_{i_{k}}+\be_{j_{k}}, 
\end{equation}
where $k=1,...,n/2$ and $(i_{k},j_{k})$ is the $k$-th edge in $\MM$.
Further, define $Q$ to be the $\ell_{2}$ projection from $\R^{n}$ 
to $\{Pv|v\in\R^{n/2}\}$, i.e.,
$$
Q=P(P^{T}P)^{-1}P^{T} .
$$

Similar to the definition of $P$, define $Y$ as the $n\times n/2$ matrix whose columns
are given by
\begin{equation}\label{definitions}
(Y)_{k}=\be_{i_{k}}-\be_{j_{k}}, 
\end{equation}
where $k=1,...,n/2$ and $(i_{k},j_{k})$ is the $k$-th edge in $\MM$.
Then, the matrix $(Y,P)$ is orthogonal and 
the columns of $Y$ and $P$ form a hierarchical bases, which
can be used to relate the two-level method to a block factorization as follows. 

Given $A$, $P$, and $Y$, define
$$
\widehat A = (Y,P)^{T}A(Y,P)
=\begin{pmatrix}
Y^{T}AY & Y^{T}AP \\
P^{T}AY & P^{T}AP
\end{pmatrix} .
$$
A direct calculation then shows that 
$$
\widehat A
=
L
\begin{pmatrix}
Y^{T}AY & 0 \\
0 & S
\end{pmatrix}
L^{T} ,
$$
where
\begin{equation}\label{schur}
S=P^{T}AP-P^{T}AY(Y^{T}AY)^{-1}Y^{T}AP
\end{equation}
is the Schur complement and 
\begin{equation}\label{definitionl}
L=
\begin{pmatrix}
I & 0 \\
P^{T}AY(Y^{T}AY)^{-1} & I 
\end{pmatrix} .
\end{equation}

Next, define $\GG_{c}$ as the unweighted coarse graph
and denote by $A_{c}$ the graph Laplacian of $\GG_{c}$. 
In contrast to most of the existing AMG methods, 
here 
$A_{c}\neq P^{T}AP$, except in special cases, e.g., for 1 dimensional problems.
Let, $\sigma$ be a positive constant such that
\begin{eqnarray}\label{def-sigma}
\sigma =
\sup_{\bv: (\bv, \one)=0}\frac{(AP\bv, P\bv)}{(A_{c}\bv, \bv)} .
\end{eqnarray}
Then, the fact that all weights in the graph corresponding to $P^{T}AP$ are 
larger than or equal to one implies 
$(AP\bv, P\bv) \geq (A_{c}\bv, \bv), \forall \bv$, 
and 
\begin{eqnarray*}
\frac{(\sigma A_{c}\bv, \bv)}{(AP\bv, P\bv)}\in [1, \sigma] 
,\quad
\forall \bv: (\bv, \one)=0 .
\end{eqnarray*}

Consider the two-level preconditioner $\widehat G$
which uses the coarse graph Laplacian $A_{c}$ by
$$
\widehat G
=
L
\begin{pmatrix}
Y^{T}AY & 0 \\
0 & \sigma A_{c}
\end{pmatrix}
L^{T} .
$$
Let $M$ be a preconditioner for $Y^{T}AY$, and 
$D$ be a preconditioner for the graph Laplacian $A_{c}$.
Then, a two-level preconditioner $\widehat B$ is defined by
\begin{eqnarray}{\label{def:twolevel-b}}
\widehat B
=
\widetilde L
\begin{pmatrix}
M(M+M^{T}-Y^{T}AY)^{-1}M^{T} & 0 \\
0 & \sigma D
\end{pmatrix}
\widetilde L^{T},
\end{eqnarray}
where
$$
\widetilde L
=
\begin{pmatrix}
I & 0 \\
P^{T}AYM^{-1} & I 
\end{pmatrix} .
$$

As observed in \cite{FaVaZiNLA_05} and \cite{panayot-book}, 
this gives a
block matrix representation of the two-level method
\begin{eqnarray*}
I-(Y,P){\widehat G}^{\dagger}(Y,P)^{T} A
&=&
   (I-Y(Y^{T}AY)^{-1}Y^{T}A)
   (I-P(\sigma A_{c})^{\dagger}P^{T}A)
   (I-Y(Y^{T}AY)^{-1}Y^{T}A) \\
I-(Y,P){\widehat B}^{\dagger}(Y,P)^{T} A
&=&
   (I-YM^{-T}Y^{T}A)(I-P(\sigma D)^{\dagger}P^{T}A)(I-YM^{-1}Y^{T}A),
\end{eqnarray*}
where the pseudo-inverse operator denoted by $^{\dagger}$ is used 
since the graph Laplacian is semi-definite. 
The convergence of the two-level method can now be estimated 
by comparing $\widehat A$ and the preconditioner $\widehat B$. 
 
The remainder of this section is dedicated to establishing a spectral equivalence 
between $\widehat A$ and $\widehat B$ for the two-level matching algorithm. 
The proof uses the following Lemma. 

\begin{lemma}\label{lem_inf}
For any $\bx \in \Re^{n/2}$ the Schur complement $S$ 
as given in (\ref{schur}) satisfies 
\begin{equation}
(S \bx, \bx)
=
\inf_{\bw}\big( A(Y \bw+P \bx) , (Y \bw+P \bx) \big).
\end{equation}
\end{lemma}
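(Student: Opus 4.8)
The plan is to recognize the right-hand side as the minimization of a convex quadratic form in $\bw$ with $\bx$ held fixed, and to show that its minimum value is exactly $(S\bx,\bx)$; this is the standard energy characterization of a Schur complement. First I would expand the quadratic form using the symmetry of $A$:
\[
\big(A(Y\bw+P\bx),\,Y\bw+P\bx\big)
= (Y^{T}AY\,\bw,\bw) + 2(Y^{T}AP\,\bx,\bw) + (P^{T}AP\,\bx,\bx),
\]
where I have used $(AY\bw,P\bx)=(P^{T}AY\bw,\bx)=(Y^{T}AP\bx,\bw)$. For fixed $\bx$ this is a quadratic in $\bw$ whose leading term is governed by $Y^{T}AY$.

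The key preliminary step is to verify that $Y^{T}AY$ is symmetric positive definite, so that the form is strictly convex in $\bw$ and the infimum is actually attained at a unique point. Indeed, $(Y^{T}AY\,\bw,\bw)=|Y\bw|_{A}^{2}\ge 0$, and it vanishes only when $Y\bw\in\operatorname{Ker}(A)=\operatorname{span}(\one)$. Since each column $\be_{i_k}-\be_{j_k}$ of $Y$ has zero entry-sum, every vector $Y\bw$ is $\ell_{2}$-orthogonal to $\one$; hence $Y\bw\in\operatorname{span}(\one)$ forces $Y\bw=0$, and since the columns of $Y$ have disjoint supports (the matching is perfect) they are linearly independent, so $\bw=0$. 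This justifies forming $(Y^{T}AY)^{-1}$ and guarantees that the infimum is a genuine minimum rather than merely a limit.

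Next I would minimize over $\bw$. Setting the gradient to zero gives the normal equations $Y^{T}AY\,\bw + Y^{T}AP\,\bx=0$, i.e. the minimizer $\bw^{\star}=-(Y^{T}AY)^{-1}Y^{T}AP\,\bx$. Substituting $\bw^{\star}$ back into the expanded form, the quadratic and cross terms combine to $-\big(P^{T}AY(Y^{T}AY)^{-1}Y^{T}AP\,\bx,\bx\big)$, so that
\[
\inf_{\bw}\big(A(Y\bw+P\bx),\,Y\bw+P\bx\big)
= (P^{T}AP\,\bx,\bx)-\big(P^{T}AY(Y^{T}AY)^{-1}Y^{T}AP\,\bx,\bx\big)
= (S\bx,\bx),
\]
which is precisely the definition of $S$ in \eqref{schur}, completing the argument.

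I expect the only real subtlety---rather than the routine completion-of-the-square computation---to be the positive definiteness of $Y^{T}AY$: because $A$ is merely positive semi-definite, one must invoke the structure of $Y$ (zero-sum columns that are $\ell_2$-orthogonal to the one-dimensional kernel of $A$, together with their linear independence) to rule out a degenerate direction along which the form would fail to be coercive. Once this is in place, the block elimination is a direct calculation and the identity follows.
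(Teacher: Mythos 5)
Your proof is correct, and at its core it is the same argument as the paper's: both characterize $(S\bx,\bx)$ as the minimum of the energy $\big(A(Y\bw+P\bx),Y\bw+P\bx\big)$ over $\bw$, attained at $\bw^{\star}=-(Y^{T}AY)^{-1}Y^{T}AP\bx$. The difference is in execution. The paper argues by recognizing $Y(Y^{T}AY)^{-1}Y^{T}AP\bx$ as the $A$-orthogonal projection of $P\bx$ onto the column space of $Y$, invoking the minimizing property of orthogonal projections, and concluding via the Pythagorean identity $(S\bx,\bx)=\|P\bx\|_{A}^{2}-\|Y(Y^{T}AY)^{-1}Y^{T}AP\bx\|_{A}^{2}$; you instead expand the quadratic, solve the normal equations, and substitute back --- a more elementary completion-of-squares computation that avoids appealing to projection theory. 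What your write-up adds, and the paper omits, is the verification that $Y^{T}AY$ is positive definite: since $A$ is only positive semidefinite, this is genuinely needed both for $(Y^{T}AY)^{-1}$ in \eqref{schur} to make sense and for the infimum to be attained, and your argument (the columns $\be_{i_k}-\be_{j_k}$ are zero-sum, so $\operatorname{Range}(Y)$ is $\ell_{2}$-orthogonal to $\one=\operatorname{Ker}(A)$ for a connected graph, and their disjoint supports under a matching give injectivity of $Y$) is exactly right. The paper only establishes this later, in the lemma showing $(AY\bw,Y\bw)/(\bw,\bw)\in[4,2d]$, and in Lemma~\ref{lem_inf} it implicitly takes the invertibility for granted; so your version is, if anything, the more self-contained of the two.
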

\begin{proof}
Note that
\begin{eqnarray*}
\big( A Y (Y^{T}AY)^{-1} Y^{T} A P \bx, P\bx \big)
&=&
\big( A Y (Y^{T}AY)^{-1} Y^{T} A P \bx, Y (Y^{T}AY)^{-1} Y^{T} A P \bx \big) \\ 
&=&
\|Y (Y^{T}AY)^{-1} Y^{T} A P \bx\|_{A}^{2}, 
\end{eqnarray*}
because here, $Y(Y^{T}AY)^{-1}Y^{T}AP\bx$ is the $A$ orthogonal projection of $P\bx$ onto the space 
spanned by the columns of $Y$ and, thus, minimizes the distance (in $A$ norm) between $P\bx$ and this
space. Hence,
\begin{align*}
( S \bx , \bx ) 
&=
\| P \bx \|_{A}^{2} - \|Y(Y^{T}AY)^{-1}Y^{T}AP\bx\|_{A}^{2} \\
&=
\inf_{\bw} \big( A(Y\bw+P\bx) , (Y\bw+P\bx) \big) 
\qedhere
\end{align*}
\end{proof}
\noindent
Let $\widehat\one$ be a vector satisfying 
$(Y,P)\widehat\one = \one$, 
then the following lemma now holds.
\begin{lemma}\label{lem_Ghat}
Let $c_{g}=\sigma |Q|_{A}^{2}$, 
where $\sigma$ is defined as in \eqref{def-sigma}. 
Then for any $\bv$, such that $(\bv, \widehat\one) = 0$, we have
\begin{eqnarray} \label{est_Ghat}
\frac{( \widehat G \bv, \bv )}{(\widehat A \bv, \bv )}
\in [1, c_{g}]  .
\end{eqnarray}
\end{lemma}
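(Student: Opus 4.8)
The plan is to exploit the fact that $\widehat A$ and $\widehat G$ share the same triangular factor $L$ from \eqref{definitionl} and the same $(1,1)$-block $Y^{T}AY$, so that the entire comparison collapses onto the $(2,2)$-blocks $S$ and $\sigma A_{c}$. Introducing the change of variables $\begin{pmatrix}\bw\\\bx\end{pmatrix}=L^{T}\bv$ and using the two block factorizations, the Rayleigh quotient becomes
\[
\frac{(\widehat G\bv,\bv)}{(\widehat A\bv,\bv)}
=\frac{(Y^{T}AY\bw,\bw)+\sigma(A_{c}\bx,\bx)}{(Y^{T}AY\bw,\bw)+(S\bx,\bx)} .
\]
Since the common term $(Y^{T}AY\bw,\bw)$ is nonnegative and $c_{g}\ge 1$ (because $\sigma\ge1$ and $|Q|_{A}\ge1$), adding it to numerator and denominator preserves any bound of the form $[1,c_{g}]$; thus a mediant argument reduces the claim to the single two-sided estimate $\sigma(A_{c}\bx,\bx)/(S\bx,\bx)\in[1,c_{g}]$ for the admissible $\bx$.

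First I would translate the side condition into the new variables. Because $(Y,P)\widehat\one=\one=P\one_{c}$, where $\one_{c}$ is the coarse constant, invertibility of $(Y,P)$ forces $\widehat\one=\begin{pmatrix}0\\\one_{c}\end{pmatrix}$; a direct computation with $L^{-1}$ gives $L^{-1}\widehat\one=\widehat\one$, so that $(\bv,\widehat\one)=(L^{T}\bv,L^{-1}\widehat\one)=(\bx,\one_{c})$. Hence the hypothesis $(\bv,\widehat\one)=0$ is exactly the condition $(\bx,\one_{c})=0$ under which $\sigma$ in \eqref{def-sigma} is defined, which is what makes the lower estimate below legitimate.

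For the lower bound I would take $\bw=0$ in Lemma \ref{lem_inf} to get $(S\bx,\bx)\le(AP\bx,P\bx)$, then invoke the definition of $\sigma$ to obtain $(AP\bx,P\bx)\le\sigma(A_{c}\bx,\bx)$; chaining these yields the quotient $\ge1$. The upper bound is the crux. Let $\bz^{\ast}=Y\bw^{\ast}+P\bx$ be the minimizer furnished by Lemma \ref{lem_inf}, so $|\bz^{\ast}|_{A}^{2}=(S\bx,\bx)$. Since $Q$ fixes the range of $P$ and annihilates the range of $Y$ (as $QP=P$ and $P^{T}Y=0$), we have $Q\bz^{\ast}=P\bx$, whence
\[
(AP\bx,P\bx)=|Q\bz^{\ast}|_{A}^{2}\le|Q|_{A}^{2}\,|\bz^{\ast}|_{A}^{2}=|Q|_{A}^{2}(S\bx,\bx).
\]
Together with $(A_{c}\bx,\bx)\le(AP\bx,P\bx)$ (all weights of $P^{T}AP$ are at least one) this gives $\sigma(A_{c}\bx,\bx)\le\sigma|Q|_{A}^{2}(S\bx,\bx)=c_{g}(S\bx,\bx)$.

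The hard part will be precisely this last chain: recognizing that the minimizer of the Schur-complement variational problem is sent by the $\ell_{2}$ projection exactly onto $P\bx$, which is what converts the abstract stability constant $|Q|_{A}$ of \eqref{A_norm_of_Q_k_v} into the quantitative comparison between $(AP\bx,P\bx)$ and $(S\bx,\bx)$. Once the two-sided bound on the block quotient is established, substituting back and applying the mediant inequality termwise closes the argument.
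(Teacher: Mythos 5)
Your proposal is correct and follows essentially the same route as the paper's proof: reduction to the $(2,2)$-blocks through the common factor $L$, the lower bound from Lemma \ref{lem_inf} plus the definition of $\sigma$, and the upper bound from the key identity $Q(Y\bw+P\bx)=P\bx$, which you apply at the minimizer $\bz^{\ast}$ while the paper phrases the same fact as $\sup_{\bu=Y\bw+P\bx}(AQ\bu,Q\bu)/(A\bu,\bu)\le |Q|_{A}^{2}$ --- the supremum is attained exactly at your $\bz^{\ast}$, so the two arguments coincide. Your explicit translation of the constraint, $(\bv,\widehat\one)=0 \Leftrightarrow (\bx,\one_{c})=0$ via $L^{-1}\widehat\one=\widehat\one$, is a detail the paper compresses into ``since $L$ is nonsingular,'' so if anything your write-up is slightly more careful on that point.
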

\begin{proof}
By Lemma~\ref{lem_inf} we have 
\begin{eqnarray*}
( A P \bx, P\bx) 
\geq 
\inf_{\bw}\big( A (Y\bw+P\bx), (Y\bw+P\bx) \big) .
\end{eqnarray*}
Furthermore,
\begin{eqnarray*}
\frac{(AP\bx, P\bx)}{(S \bx, \bx)}
&=&
\frac{(AP\bx, P\bx)}{\inf_{\bw} \big( A (Y\bw+P\bx), (Y\bw+P\bx) \big) } \\
&=&
\sup_{\bw}\frac{(AP\bx, P\bx)}{\big( A (Y\bw+P\bx), (Y\bw+P\bx) \big)} \\
&=&
\sup_{\bu=Y\bw+P\bx}
\frac{(AQ\bu, Q\bu)}{(A\bu, \bu)} 
\leq
\sup_{\bu}
\frac{(AQ\bu, Q\bu)}{(A\bu, \bu)} 
=
|Q|_{A}^{2} .
\end{eqnarray*}
Note that the only difference between the preconditioners $\widehat G$ and $\widehat A$ 
is that the former matrix uses $\sigma A_{c}$, whereas the latter uses $S$ to define the 2-2 block. 
The spectral equivalence constant between the
operators $\sigma A_{c}$ and $S$ 
is obtained as follows:
\begin{eqnarray*}
\inf_{\bu}
\frac{\sigma (A_{c} \bu, \bu)}{(AP \bu, P\bu)} 
\inf_{\bv}
\frac{(AP \bv, P \bv)}{(S \bv, \bv)} 
\leq
\frac{\sigma (A_{c} \bw, \bw)}{(S \bw, \bw)} 
\leq
\sup_{\bu}
\frac{\sigma (A_{c} \bu, \bu)}{(AP \bu, P \bu)} 
\sup_{\bv}
\frac{(AP \bv, P\bv)}{(S \bv, \bv)} , \\
\quad 
\forall \bw:(\bw,\bm{1})=0, 
\end{eqnarray*}
which implies
\begin{eqnarray*}
\frac{\sigma (A_{c} \bw, \bw)}{(S \bw, \bw)} 
\in 
[1,\sigma |Q|_{A}^{2}]
,\qquad 
\forall \bw:(\bw,\bm{1})=0 .
\end{eqnarray*}
Hence, for any $\bx$ and $\by$
$$
\frac{
\begin{pmatrix}
\bx \\ \by 
\end{pmatrix}^T
\begin{pmatrix}
Y^{T}AY & 0 \\
0 & \sigma A_{c}
\end{pmatrix}
\begin{pmatrix}
\bx \\ \by 
\end{pmatrix}
}{
\begin{pmatrix}
\bx \\ \by 
\end{pmatrix}^T
\begin{pmatrix}
Y^{T}AY & 0 \\
0 & S
\end{pmatrix}
\begin{pmatrix}
\bx \\ \by
\end{pmatrix}
}\\
=
\frac{(AY\bx, Y\bx) + (AP\by, P\by)}{(AY\bx, Y\bx) + (S\by, \by)} \\
\in [1, \sigma |Q|_{A}^{2}] ,
$$ 
which is equivalent to 
\eqref{est_Ghat}
since $L$ is nonsingular. 
\end{proof}

Since the two-level method $\widehat G$ requires 
exact solvers for $Y^{T}AY$ and the graph Laplacian $A_{c}$, 
the convergence rate of a method that uses $\widehat B$ which is defined by replacing
these exact solves with approximate ones is of interest.
Combining Lemma~\ref{lem_Ghat} and the two-level convergence estimate 
(Theorem 4.2 in \cite{FaVaZiNLA_05}), 
yields the following result.
\begin{theorem}\label{thm_Bhat}
If the preconditioners $M$ and $D$ are spectrally equivalent to 
$Y^{T}AY$ and $A_{c}$ such that 
$$
\frac
{\big( (M^{T}+M-Y^{T}AY)^{-1}M \bu, M\bu \big)}{(A Y\bu, Y\bu)} 
\in 
[1, \kappa_{s}]
\quad \mbox{and} \quad
\frac{(D\bw, \bw)}{(A_{c}\bw, \bw)}
\in
[1, \eta]
,\quad \forall \bu, \bw:(\bw,\one)=0, 
$$ 
then 
\begin{equation}\label{est_Bhat}
\frac{(\widehat B \bv, \bv)}{(\widehat A \bv, \bv)}
\in [1, (\kappa_{s}+\sigma\eta-1)|Q|_{A}^{2}] 
, \qquad
\forall \bv:(\bv,\widehat\one) = 0 .
\end{equation} 
\end{theorem}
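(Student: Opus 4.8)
The plan is to read off $\widehat B$ as an inexact version of the factored two-level operator $\widehat G$ analyzed in Lemma~\ref{lem_Ghat}, and then invoke the abstract two-level convergence estimate (Theorem~4.2 of \cite{FaVaZiNLA_05}). Concretely, $\widehat B$ is obtained from $\widehat A = L\,\mathrm{diag}(Y^{T}AY,\,S)\,L^{T}$ by (i) replacing the exact pivot solve $Y^{T}AY$ with the symmetrized smoother $\overline M := M(M+M^{T}-Y^{T}AY)^{-1}M^{T}$, (ii) replacing the exact Schur solve by the coarse preconditioner $\sigma D$, and (iii) replacing the exact transform $L$ by $\widetilde L$, which uses $M^{-1}$ in place of $(Y^{T}AY)^{-1}$. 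The two product identities for $I-(Y,P)\widehat G^{\dagger}(Y,P)^{T}A$ and $I-(Y,P)\widehat B^{\dagger}(Y,P)^{T}A$ already recorded in the excerpt show that $\widehat G$ and $\widehat B$ are genuine multiplicative two-level operators in the hierarchical basis $(Y,P)$, which is exactly the form the cited theorem consumes. A preliminary reduction is to restrict to $\bv$ with $(\bv,\widehat\one)=0$: since $(Y,P)$ is orthogonal and $(Y,P)\widehat\one=\one$, this is equivalent to $(Y,P)\bv$ being $\ell_{2}$-orthogonal to the kernel $\operatorname{span}(\one)$ of $A$, so that the pseudo-inverses of the semidefinite coarse operator act on the complement of their kernel and the ratios below are well defined.

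Next I would convert the two hypotheses into the inputs needed. The smoother hypothesis states precisely that $(\overline M\bu,\bu)/(AY\bu,Y\bu)\in[1,\kappa_{s}]$, where the lower bound $1$ is the standard fact that a convergent symmetrized smoother dominates the operator it approximates and $\kappa_{s}$ measures its quality. For the coarse part I would chain two spectral equivalences: the coarse-solver hypothesis gives $(\sigma D\bw,\bw)/(\sigma A_{c}\bw,\bw)=(D\bw,\bw)/(A_{c}\bw,\bw)\in[1,\eta]$, while the proof of Lemma~\ref{lem_Ghat} already supplies $\sigma(A_{c}\bw,\bw)/(S\bw,\bw)\in[1,\sigma|Q|_{A}^{2}]$ for $\bw\perp\one$. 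Multiplying these yields $(\sigma D\bw,\bw)/(S\bw,\bw)\in[1,\sigma\eta|Q|_{A}^{2}]$, i.e.\ the coarse preconditioner is spectrally equivalent to the Schur complement.

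Feeding the smoother constant $\kappa_{s}$ and the coarse data into the additive two-level estimate of Theorem~4.2, and using Lemma~\ref{lem_Ghat} to carry the stability factor $|Q|_{A}^{2}$, then produces the upper bound $(\kappa_{s}+\sigma\eta-1)|Q|_{A}^{2}$. The lower bound $1$ reflects that each approximate solve dominates its exact counterpart and corresponds to the multiplicative error-propagation operator being nonnegative in the $\widehat A$-inner product, which the theorem delivers alongside the upper bound. As a consistency check, setting $\kappa_{s}=\eta=1$ collapses the constant to $\sigma|Q|_{A}^{2}=c_{g}$, exactly recovering Lemma~\ref{lem_Ghat}, which confirms that the bookkeeping has been arranged correctly.

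I expect the main obstacle to be precisely this bookkeeping: turning the three ingredients $\kappa_{s}$, $\eta$, and $c_{g}=\sigma|Q|_{A}^{2}$ into the \emph{additive} constant $\kappa_{s}+\sigma\eta-1$ scaled by $|Q|_{A}^{2}$, rather than a cruder multiplicative bound such as $\kappa_{s}\cdot\sigma\eta\cdot|Q|_{A}^{2}$. This sharpening is what Theorem~4.2 of \cite{FaVaZiNLA_05} is designed to give, but its application requires care because the fine-block transform $\widetilde L$ does not coincide with $L$, so $\widehat B$ is not a block-diagonal perturbation of $\widehat G$ and the smoother and coarse errors interact through the coupling block $P^{T}AY$. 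Verifying that this interaction preserves the additive form, together with the simultaneous control of the lower bound on the semidefinite complement $\bv\perp\widehat\one$, is the delicate part of the argument.
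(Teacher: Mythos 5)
Your proposal is correct and follows essentially the same route as the paper: the paper's entire proof consists of combining Lemma~\ref{lem_Ghat} (which supplies the spectral equivalence $\sigma(A_{c}\bw,\bw)/(S\bw,\bw)\in[1,\sigma|Q|_{A}^{2}]$ on the complement of $\one$) with the hypotheses on $M$ and $D$ and the abstract two-level estimate of Theorem~4.2 in \cite{FaVaZiNLA_05}, exactly the assembly you describe, including the observation that $\kappa_{s}=\eta=1$ recovers \eqref{est_Ghat}. In fact your write-up makes explicit more of the bookkeeping (the chaining of $D\sim A_{c}\sim S$ and the role of the hierarchical basis $(Y,P)$) than the paper does, which simply cites the external theorem for the additive form of the constant.
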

\noindent Note that this estimate reduces to \eqref{est_Ghat} when 
$M=Y^{T}AY$ and $D=A_{c}$.

\begin{subsection}{Convergence estimate for matching}

We here show the sharpness of the estimation given by Theorem \ref{thm_Bhat} 
when the graph Laplacian corresponds to a structured grid, 
and the coarse space is given by aligned matching. 

Define an $m$-dimensional hypercubic grid as 
the graph Laplacian $\GG=(\VV,\EE)$ such that 
the following conditions are satisfied. 
\begin{enumerate}
\item
A vertex $i_{v}\in \VV$ corresponds to an vector $v\in \R^{m}$, 
and $(v,e_{j})\in [1, 2, \dots, s_{j}]$, $j=1,2,\dots, m$. 
Here $e_{j}$ is an Euclidean basis and 
$s_{1},s_{2},\dots, s_{m}$ are given positive integers that represent 
the numbers of vertices along all dimensions. 
\item
An edge $k=(i_{u}, i_{v})$ is in the edge set $\EE$ if and only 
$u-v=e_{j}$ and $j\in [1,2,\dots, m]$. 
\end{enumerate}

Then the energy norm $|Q|_{A}$ can be estimated for aligned matching on 
a hypercubic grid $A$. 
\begin{lemma}\label{equi}
Let $\GG$ be an $m$-dimensional hypercibic grid and 
$k\in [1,2,\dots,m]$ is a fixed dimension. 
Assume that $s_{k}$ is an even number. 
The matching along the $k$-th dimension is defined as 
\begin{eqnarray*}
\MM=
\{
l=(i_{v},i_{v+e_{k}})
|
v\in \VV, 
\text{ and $(v,e_{k})$ is an odd number }
\} .
\end{eqnarray*}
Let $Q$ be the $\ell_{2}$ projection onto the 
piecewise constant space resulting from the matching $\MM$. 
Then $Q$ satisfies 
$|Q|_{A}\leq 2$. 
\end{lemma}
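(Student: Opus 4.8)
The plan is to exploit the tensor-product structure of the hypercubic grid and reduce the estimate to the one-dimensional path case already settled in Lemma~\ref{lem_qa=2}. The naive route through the general bound \eqref{rho_pi_pi_t} is hopeless here: the maximum degree of an $m$-dimensional grid is $2m$, so that estimate only gives $\rho(\Pi\Pi^T)\le 2d-2=4m-2$, which grows with $m$ and is nowhere near a constant. The whole point is therefore to use that the matching $\MM$ acts only along the single coordinate direction $k$, leaving the other $m-1$ directions untouched by the averaging.

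First I would write the grid Laplacian as a sum over coordinate directions. Letting $L_j$ denote the one-dimensional path Laplacian on $s_j$ vertices, the Cartesian-product structure of $\GG$ gives
\[
A=\sum_{j=1}^m A^{(j)},\qquad A^{(j)}=I\otimes\cdots\otimes L_j\otimes\cdots\otimes I,
\]
with $L_j$ in the $j$-th slot. Because the aggregates of the aligned matching are the pairs $\{i_v,i_{v+e_k}\}$, the projection $Q$ averages only in the $k$-th coordinate and hence factors as $Q=I\otimes\cdots\otimes Q_k\otimes\cdots\otimes I$, where $Q_k$ is the one-dimensional averaging projection for the pairwise matching of the $k$-th path. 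I would then record $|Q\bv|_A^2=(AQ\bv,Q\bv)=\sum_{j=1}^m(A^{(j)}Q\bv,Q\bv)$ and bound the $m$ summands separately.

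For the $m-1$ directions $j\neq k$ the factor $A^{(j)}$ acts in a slot different from $Q$, so $A^{(j)}$ and $Q$ commute; writing $A^{(j)}=(A^{(j)})^{1/2}(A^{(j)})^{1/2}$ and using that $Q$ is an orthogonal projection (hence $\|Q\|\le 1$) yields $(A^{(j)}Q\bv,Q\bv)=\|Q(A^{(j)})^{1/2}\bv\|^2\le (A^{(j)}\bv,\bv)$. For the single direction $j=k$, both $A^{(k)}$ and $Q$ act only in the $k$-th slot, so the quadratic form decouples over the fibers of the grid in the $k$-direction: on each such fiber it is exactly the path form $(L_kQ_k\bw,Q_k\bw)$, to which Lemma~\ref{lem_qa=2} applies (a path has maximum degree $2$), giving $(L_kQ_k\bw,Q_k\bw)\le 2\,(L_k\bw,\bw)$. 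Summing over fibers yields $(A^{(k)}Q\bv,Q\bv)\le 2\,(A^{(k)}\bv,\bv)$. Combining the two estimates,
\[
|Q\bv|_A^2\le\sum_{j\neq k}(A^{(j)}\bv,\bv)+2\,(A^{(k)}\bv,\bv)\le 2\sum_{j=1}^m(A^{(j)}\bv,\bv)=2\,|\bv|_A^2,
\]
so that in fact $|Q|_A^2\le 2$, which is at least as strong as the asserted $|Q|_A\le 2$.

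The step demanding the most care is the $j=k$ term: this is the only direction in which the averaging genuinely removes and redistributes energy, and it is precisely the reason the general estimate \eqref{rho_pi_pi_t} cannot be invoked directly. Everything hinges on isolating this one direction so that the one-dimensional bound of Lemma~\ref{lem_qa=2} can be applied fiberwise. An alternative, more computational route that avoids the tensor formalism is to build $\Pi$ explicitly from \eqref{row_of_pk}: the rows for matched edges vanish, the rows for external edges along direction $k$ reproduce the familiar $[\tfrac12,1,-\tfrac12]$ pattern, and for an external edge along a direction $j\neq k$ one may choose the row placing $\tfrac12$ on the edge itself and $\tfrac12$ on its parallel partner edge across the matched pair. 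With this choice a direct count gives $\|\Pi\|_\infty=2$ and $\|\Pi\|_1=1$, whence $\rho(\Pi\Pi^T)\le\|\Pi\|_1\|\Pi\|_\infty=2$, recovering the same bound through \eqref{A_norm_of_Q_k_v}; the delicate point in this version is checking that the non-standard choice of $\Pi$ still satisfies the commutation $\Pi B=BQ$.
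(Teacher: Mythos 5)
Your proof is correct, and its skeleton coincides with the paper's: the paper also splits the edge set into the direction-$k$ edges $\Omega$ and the remainder $\overline\Omega$ (your $A^{(k)}$ and $\sum_{j\neq k}A^{(j)}$ are exactly its $A_{\Omega}$ and $A_{\overline\Omega}$), and it also obtains the factor $2$ by applying Lemma~\ref{lem_qa=2} to the direction-$k$ fibers, which are paths. The only genuine divergence is in the complementary bound $(A_{\overline\Omega}Q\bu,Q\bu)\le(A_{\overline\Omega}\bu,\bu)$: the paper proves it by an explicit local computation on the ``squares'' of Figure~\ref{figure_4}, namely $2\big(\tfrac{u_i+u_j}{2}-\tfrac{u_k+u_l}{2}\big)^2=\tfrac12\big((u_i-u_k)+(u_j-u_l)\big)^2\le(u_i-u_k)^2+(u_j-u_l)^2$, whereas you get it structurally from the factorization $Q=I\otimes\cdots\otimes Q_k\otimes\cdots\otimes I$, commutativity of $Q$ with $A^{(j)}$ for $j\neq k$, and $\|Q\|\le 1$. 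These are the same estimate in different clothing; indeed the non-standard row $[\tfrac12,\tfrac12]$ of $\Pi$ in your alternative sketch, whose commutation property $(\Pi B\bu)_e=\tfrac12(u_i-u_k)+\tfrac12(u_j-u_l)=(BQ\bu)_e$ does hold, encodes precisely the paper's square identity, and your count $\|\Pi\|_1=1$, $\|\Pi\|_\infty=2$ is also valid. Each formulation buys something: yours is shorter, isolates the mechanism (averaging in one tensor slot cannot increase energy in orthogonal slots), and generalizes verbatim to the line aggregates of Remark~\ref{remark-line-aggregation}, giving $|Q|_A^2\le m$ once the path lemma is replaced by its length-$m$ analogue; the paper's edge-by-edge argument needs no global product structure, only that every non-matching edge have a parallel partner across the matched pairs, so it is the form that adapts to graphs that are only locally grid-like. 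You are also right that the generic bound \eqref{rho_pi_pi_t} is useless here (it gives $2d-2=4m-2$), and both proofs in fact yield the stronger conclusion $|Q|_A^2\le 2$ rather than merely $|Q|_A\le 2$.
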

\begin{proof}
Define the set $\Omega$ be the collection of all edges along the $k$-th dimension, 
as 
\begin{eqnarray*}
\Omega
=
\{
l=(i_{u},i_{v})
|
v-u=e_{k}
\} .
\end{eqnarray*}
Also define $\overline\Omega=\EE\setminus\Omega$ and 
the graph Laplacians $A_{\Omega}$ and $A_{\overline\Omega}$, 
derived from $\Omega$ and $\overline\Omega$ respectively. 
\end{proof}

The graphs in the set $\Omega$ are paths, whose maximum degree is 2, 
and $\MM\subset\Omega$ is a also matching on 
these paths. 
Therefore by Lemma \ref{lem_qa=2} it is true that
\begin{eqnarray}\label{q-a-omega}
(A_{\Omega} Q\bu,Q\bu) \leq 2 (A_{\Omega}\bu, \bu) .
\end{eqnarray}
\noindent
On the other hand, 
the matching is aligned on the set $\overline\Omega$, 
meaning that any two matched pairs are connected 
through 0 or 2 edges in $\overline\Omega$, 
thus the edges in set $\overline\Omega$ can then be subdivided into 
many sets of edges of the same type, 
one of which is shown
in Fig. \ref{figure_4}.
\begin{figure}[h]
\includegraphics[page=4,scale=0.75]{figs_amli_matching}
\caption{Matching $\MM$ on a subset of $\overline\Omega$}\label{figure_4}
\end{figure}
Notice that in this figure, the edge $(i,k)$ and $(j,l)$ are in $\overline\Omega$, 
while $(i,j)$ and $(k,l)$ are in $\MM$. 
Using the definition of $Q$, 
the energy norm of $Q$ is estimated on the the subset of $\overline\Omega$ 
indicated by Fig. \ref{figure_4}, by 
\begin{eqnarray*}
2 \left(\frac{u_{i}+u_{j}}{2} - \frac{u_{k}+u_{l}}{2}\right)^{2}
&=&
\frac12 \left((u_{i}-u_{k}) + (u_{j}-u_{l})\right)^{2}\\
&\leq&
(u_{i}-u_{k})^{2} + (u_{j}-u_{l})^{2}.
\end{eqnarray*}
Thus implies that 
\begin{eqnarray}\label{q-a-omega-bar}
(A_{\overline\Omega}Q\bu,Q\bu) \leq (A_{\overline\Omega}\bu,\bu) .
\end{eqnarray}
Combining \eqref{q-a-omega} and \eqref{q-a-omega-bar} results that 
$(AQ\bu,Q\bu)\leq 2 (A\bu, \bu)$, 
or $|Q|_{A}\leq 2$. 


\begin{remark}\label{remark-line-aggregation}
A similar estimate follows for aligned partitionings consisting of 
line segments of size $m$.  
Namely, in this case it can be shown that $|Q|_{A}^{2}\leq m$ holds. 
This estimate in turn agrees with properties of Chebyshev polynomials, 
suggesting the use of an AMLI method equipped with 
certain Chebyshev polynomials. 
Comparing this result with the result from Theorem \ref{equi} suggests that using a more shape regular partitioning 
rather than one consisting of lines is more appropriate since this gives smaller values of the semi-norm $|Q|_{A}$. 
\end{remark}

A bound on the constant $\kappa_{s}$  follows by using that $Y^{T}AY$ is well conditioned 
and that its condition number depends on the degree of the graph, 
but not on the size of the graph. 
\begin{lemma}
Let $\MM$ be the perfect matching on a graph maximum whose degree is $d$, 
and let $S$ be defined as in \eqref{definitions}, 
then we have
\begin{eqnarray*}
\frac{(AY \bw, Y\bw)}{(\bw, \bw)} 
\in 
[4,2d] 
,\quad \forall \bw\neq 0.
\end{eqnarray*}
\end{lemma}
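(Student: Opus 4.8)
The plan is to read the quotient $(AY\bw,Y\bw)/(\bw,\bw)$ as the Rayleigh quotient of the compressed matrix $Y^{T}AY$, so that the claim is equivalent to locating the spectrum of $Y^{T}AY$ inside $[4,2d]$. The structural fact that drives everything is that a vector $\bz:=Y\bw$ is antisymmetric on each matched pair: since $\MM$ is a perfect matching, every vertex lies in exactly one edge $(i_{k},j_{k})\in\MM$, and $\bz_{i_{k}}=\bw_{k}$, $\bz_{j_{k}}=-\bw_{k}$. Consequently $\|\bz\|^{2}=2\|\bw\|^{2}$, the columns of $Y$ are orthogonal with $Y^{T}Y=2I$, and $Y^{T}P=0$, so that $Y^{T}AY$ is twice a genuine compression of $A$ onto the range of $Y$, which is precisely the orthogonal complement of the range of $P$.

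For the lower bound I would simply discard the non-matching edges. Writing $(AY\bw,Y\bw)=\sum_{e=(p,q)\in\EE}(\bz_{p}-\bz_{q})^{2}$ and retaining only the terms coming from $\MM$ gives
\[
(AY\bw,Y\bw)\;\ge\;\sum_{(i_{k},j_{k})\in\MM}(\bz_{i_{k}}-\bz_{j_{k}})^{2}=\sum_{k}(2\bw_{k})^{2}=4\,(\bw,\bw),
\]
because every omitted term is nonnegative. This produces the left endpoint $4$, and since an eigenvector achieving equality exists (on a path, for instance, $Y^{T}AY=\left(\begin{smallmatrix}5&1\\1&5\end{smallmatrix}\right)$ with minimal eigenvalue $4$), the estimate is sharp and no finer argument is needed here.

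The upper bound is where the actual work lies, and I would obtain it by computing the entries of $Y^{T}AY$ explicitly and then invoking Gershgorin. Using $Y^{T}\be_{p}=\pm\be_{\kappa(p)}$, where $\kappa(p)$ denotes the pair containing the vertex $p$, each edge $e=(p,q)$ contributes a rank-one term $(Y^{T}(\be_{p}-\be_{q}))(Y^{T}(\be_{p}-\be_{q}))^{T}$ to $Y^{T}AY$. The matching edges contribute $4\be_{k}\be_{k}^{T}$, while a non-matching edge joining pair $a$ to pair $b$ contributes diagonal mass $\be_{a}\be_{a}^{T}+\be_{b}\be_{b}^{T}$ together with an off-diagonal $\pm(\be_{a}\be_{b}^{T}+\be_{b}\be_{a}^{T})$. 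Hence the $k$-th diagonal entry equals $d_{i_{k}}+d_{j_{k}}+2$, the Gershgorin radius $R_{k}$ is at most the number of non-matching edges incident to the pair, namely $d_{i_{k}}+d_{j_{k}}-2$, and combining center with radius, together with $d_{i_{k}},d_{j_{k}}\le d$, bounds every eigenvalue from above.

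The step I expect to be the main obstacle is exactly this last degree count, because the diagonal entries alone already reach $d_{i_{k}}+d_{j_{k}}+2$, so the inter-pair off-diagonal mass must be controlled delicately—through the sign cancellation carried by the products $\epsilon_{p}\epsilon_{q}$—rather than by the crude bound that replaces each cross term by its absolute value. An alternative and cleaner route is to bypass Gershgorin and write $Y^{T}AY=2\,\widetilde Y^{T}A\widetilde Y$ with $\widetilde Y=Y/\sqrt2$ having orthonormal columns, so that $\lambda_{\max}(Y^{T}AY)\le 2\lambda_{\max}(A)$, and then invoke the standard graph-Laplacian estimate $\lambda_{\max}(A)\le 2d$; the hard part will be reconciling the constant produced by these estimates (which is naturally of order $d$ but carries the factor coming from $Y^{T}Y=2I$) with the precise right endpoint claimed, and I would cross-check the two derivations against each other to pin it down.
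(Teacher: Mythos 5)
Your treatment of the lower bound is exactly the paper's argument: discard the non-matching edges and note that each matched edge contributes $(2\bw_k)^2$, giving $(AY\bw,Y\bw)\ge 4(\bw,\bw)$; nothing to add there. For the upper bound, your Gershgorin computation is also correct --- diagonal entries $d_{i_k}+d_{j_k}+2$, off-diagonal row mass at most $d_{i_k}+d_{j_k}-2$ --- but what it yields is $\rho(Y^TAY)\le 2(d_{i_k}+d_{j_k})\le 4d$, and the ``obstacle'' you flag in trying to push this down to $2d$ is not a defect of your argument: the claimed endpoint $2d$ is false. Your own $2\times 2$ example already refutes it: on the $4$-vertex path one has $d=2$ and $Y^TAY=\left(\begin{smallmatrix}5&1\\1&5\end{smallmatrix}\right)$, whose largest eigenvalue is $6>4=2d$. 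Nor can any sign-cancellation argument rescue the constant: on the circular ladder (prism) graph matched along the rungs, every vertex has degree $3$ and $Y^TAY$ is the circulant tridiagonal matrix with diagonal $8$ and off-diagonal entries $-2$, whose largest eigenvalue equals $12=4d$ exactly (for an even number of rungs). So the correct interval is $[4,4d]$, it is sharp, and your two routes (Gershgorin, and $\lambda_{\max}(Y^TAY)\le \|Y\|_2^2\,\lambda_{\max}(A)\le 2\cdot 2d$) agree on it; there is nothing to reconcile downward.

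The reason the paper asserts $2d$ is an error in its own proof: it bounds $\rho(Y^TAY)\le\|Y^TAY\|_1\le\|Y^T\|_{1}\|A\|_{\infty}\|Y\|_{\infty}=1\cdot 2d\cdot 1=2d$, but submultiplicativity of induced norms is only valid within a single norm; mixing $\ell_1$- and $\ell_\infty$-induced factors in this way is not a legitimate inequality. The valid chain $\|Y^TAY\|_1\le\|Y^T\|_1\|A\|_1\|Y\|_1=1\cdot 2d\cdot 2=4d$ reproduces precisely your bound. The error is harmless for the way the lemma is used downstream: all that matters for the conclusion $\kappa_s\le 1+\epsilon$ (via a few Richardson/CG steps on $Y^TAY$) is that the condition number of $Y^TAY$ is bounded in terms of $d$ alone, independently of $n$, and your interval $[4,4d]$ delivers exactly that. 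In short: your proposal is mathematically sound, follows the paper on the lower bound, and on the upper bound it quietly corrects the paper; the statement itself should read $[4,4d]$.
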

\begin{proof}
The $A$-norm of the vector $Y\bw$ is computed by definition:
\begin{eqnarray*}
(AY\bw, Y\bw)
\geq 
\sum_{k=(i,j)\in\MM} \big((Y\bw)_{i}-(Y\bw)_{j}\big)^{2}
=
\sum_{k=(i,j)\in\MM} \big((Y\bw)_{i} + (Y\bw)_{i}\big)^{2}
=
4\bw^{T}\bw .
\end{eqnarray*}
We also have 
\begin{equation*}
\rho(Y^{T}AY) 
\leq 
\|Y^{T}AY\|_{1} 
\leq 
\|Y^{T}\|_{1} \|A\|_{\infty} \|Y\|_{\infty}
=2d .
\qedhere
\end{equation*}
\end{proof}
From the Lemma it follows that for any $ \epsilon >0$ there exists 
a smoother $M$ such that 
the bound on the constant $\kappa_{s}$ in Theorem \ref{thm_Bhat} is 
\begin{eqnarray*}
\kappa_{s} \leq 1+\epsilon .
\end{eqnarray*}
This result in turn implies that 
an efficient solver for $Y^{T}AY$ can be constructed by applying 
a few Conjugate Gradient iterations with an overall cost that is linear with respect to 
the size of $Y^{T}AY$. 

The constant $\sigma$ in \eqref{def-sigma} can be estimated by checking the weights of the 
graph for the graph Laplacian $P^{T}AP$. 
Taking any two distinct subgraphs (edges) in the matching, say the $k$-th and $l$-th such that $k\neq l$, 
it follows that the corresponding entry $(P^{T}AP)_{kl}$ is equal to the number of exterior edges that connect to 
these subgraphs. 
For an aligned matching aligned a fixed dimension of a hypercubic grid, 
these weights are bounded by $2$. 
For any general graph $A$, the weights in $P^{T}AP$ are bounded by $4$, 
since there are at most $4$ distinct edges that connect to any other 2 distinct edges. 
Then, letting $A_{c}$ to denote the unweighted graph Laplacian 
on the graph defined by $P^{T}AP$, 
and noting that all off-diagonal entries of $A_c$ are equal to $-1$, 
it follows that 
$$\sigma=
\left\{
\begin{array}{ll}
2 \qquad & \text{for an aligned matching on a hypercubic grid of any dimension; } \\
4 & \text{for a given matching on any graph. }
\end{array}
\right.
$$
\begin{remark}
These estimates 
can be generalized to
other subgraph partitionings in a similar way. 
As an example, consider 
again a graph for a hypercubic grid of any dimension.  Then, 
for line aggregates of size $m$ (aligned with the grid)
the following estimate holds
\begin{eqnarray*}
|Q|_{A}^{2}\leq m
,\quad
\kappa_{s}\leq 1+\epsilon
,\quad
\eta=1
,\quad
\sigma\leq m .
\end{eqnarray*} 
Such estimates give insight into the design of a nearly optimal
multilevel method.  Moreover, the bounds are sharp enough, namely, the
corresponding multilevel method can be proven to have convergence rate
$\approx (1-1/\log n)$ and $O(n\log n)$ complexity.
\end{remark}

\end{subsection}

\section{Algebraic multilevel iteration (AMLI) based on matching}\label{amli}




In this section, a multilevel method that uses recursively the
two-level matching methods from Section~\ref{sec:two_lev} in combination with a
polynomial stabilization, also known as Algebraic Multilevel Iteration (AMLI) cycle is analyzed. 
Here, the focus is on proving a nearly optimal convergence rate, 
that is, one which is nearly independent of the number of unknowns $n$, 
and at the same time has low computational complexity.   

\subsection{Multilevel hierarchy}

Assume that $A_{J}=A$ is an $n\times n$ graph Laplacian matrix 
where $n=2^{J}$. 
For $k=1,\ldots,J$ define 
the matching $\MM_k$ and the prolongation operator $P_{k}$ 
according to \eqref{definitionp}, 
then compute the graph Laplacian $A_{k}$ of the coarse graph $\GG_{k}$ 
(Recall that, $A_{k-1}\neq P_{k}^{T}A_{k}P_{k}$). 
The index $k$ starts at 1 because 
the analysis is simpler if  the coarsest graph has more than 1 vertex. 
Also, define $Y_{k}$ and $L_{k}$ for $A_{k}$ as in \eqref{definitions} and 
\eqref{definitionl},
and let the two-level preconditioner $\widehat G_{k}$ on each level $k$ be given by 
\begin{eqnarray*}
\widehat G_{k}
=
L_{k}
\begin{pmatrix}
Y_{k}^{T}A_{k}Y_{k} & 0 \\
0 & \sigma A_{k-1} 
\end{pmatrix}
L_{k}^{T}
, \qquad k=2,\dots,J .
\end{eqnarray*}
Then an AMLI preconditioner is defined recursively by
\begin{eqnarray*}
B_{1}^{-1} &=& A_{1}^{\dagger}, \\
\widehat B_{k}^{-1} 
&=&
L_{k}^{-T}
\begin{pmatrix}
(Y_{k}^{T}A_{k}Y_{k})^{-1} & 0 \\
0 & \sigma^{-1} B_{k-1}^{-1}q_{k-1}(A_{k-1}B_{k-1}^{-1})
\end{pmatrix}
L_{k}^{-1}, 
\quad k=2,\dots,J ,  \\
B_{k}^{-1}
&=&
(Y_{k},P_{k})^{T}\widehat B_{k}^{-1} (Y_{k},P_{k}), 
\qquad k=2,\ldots,J ,
\end{eqnarray*}
where $q_{k}(t)$ is a polynomial on that determines 
a special coarse level correction on the $k$-th level. 
In this case, where an AMLI W-cycle is used, $q_{k}(t)$ is a linear function 
for all $k$. 

In the remainder of this section, sufficient conditions for guaranteeing the 
spectral equivalence between the multilevel preconditioner $B_{J}$, as defined
above, and the graph Laplacian $A$ are derived. We first prove two
auxiliary results, which are needed in the analysis below. 
\begin{proposition}\label{prop:spdag}
  Let $A: V\mapsto V$ and $G:V\mapsto V$ be symmetric positive
  semidefinite operators on a finite dimensional real Hilbert space $V$. Suppose
  that the following spectral equivalence holds:
\begin{equation}\label{sp}
c_0 (A\bv,\bv) \le (G \bv, \bv) \le c_1 (A \bv,\bv), \quad c_0>0,\quad c_1 > 0. 
\end{equation}
Then, we also have that
\begin{equation}\label{spdag}
c_1^{-1}(A^\dagger \bv,\bv) \le (G^\dagger  \bv, \bv) \le c_0^{-1} (A^\dagger \bv,\bv). 
\end{equation}
\end{proposition}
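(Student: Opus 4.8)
The plan is to use the hypothesis that $c_0>0$ in \eqref{sp} to force $A$ and $G$ to share a kernel, and then to transfer the whole inequality to the subspace where both operators are genuinely positive definite, on which passing to inverses reverses the two-sided bound and reciprocates the constants.

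First I would establish that $\operatorname{Ker}(A)=\operatorname{Ker}(G)=:N$. Indeed, if $A\bv=0$ then $(A\bv,\bv)=0$, so $0\le(G\bv,\bv)\le c_1(A\bv,\bv)=0$ forces $G\bv=0$ because $G$ is positive semidefinite; conversely, $G\bv=0$ together with $c_0(A\bv,\bv)\le(G\bv,\bv)=0$ and $c_0>0$ gives $A\bv=0$. Since both operators are symmetric, their ranges coincide with $W:=N^{\perp}$, and the Moore--Penrose inverses $A^{\dagger}$, $G^{\dagger}$ are symmetric positive semidefinite, vanish on $N$, and act on $W$ as the ordinary inverses of the positive definite restrictions $A|_{W}$, $G|_{W}$.

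Next I would reduce both \eqref{sp} and the target \eqref{spdag} to inequalities tested only on $W$. The key observation is that for any symmetric positive semidefinite $T$ with kernel $N$, the quadratic form $\bv\mapsto(T\bv,\bv)$ depends only on $P_{W}\bv$: writing $\bv=\bw+\bn$ with $\bw\in W$ and $\bn\in N$, one has $T\bv=T\bw\in W$, whence $(T\bv,\bv)=(T\bw,\bw)$. Applying this to $A,G,A^{\dagger},G^{\dagger}$, it suffices to prove \eqref{spdag} for $\bw\in W$. On $W$ the engine is the variational identity, valid for $\bw\in W=\operatorname{range}(A)$,
\[
(A^{\dagger}\bw,\bw)=\sup_{\bv}\big(2(\bw,\bv)-(A\bv,\bv)\big),
\]
with the supremum attained at a solution of $A\bv=\bw$, and likewise for $G$. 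From $(G\bv,\bv)\ge c_0(A\bv,\bv)$ I get
\[
(G^{\dagger}\bw,\bw)\le\sup_{\bv}\big(2(\bw,\bv)-c_0(A\bv,\bv)\big)=c_0^{-1}(A^{\dagger}\bw,\bw),
\]
where the last equality follows by factoring out $c_0$ and rescaling $\bw\mapsto c_0^{-1}\bw$; symmetrically, $(G\bv,\bv)\le c_1(A\bv,\bv)$ yields $c_1^{-1}(A^{\dagger}\bw,\bw)\le(G^{\dagger}\bw,\bw)$. Combining these gives \eqref{spdag} on $W$, and hence on all of $V$ by the reduction above. (Equivalently, on $W$ one has the operator bounds $c_0\,A|_{W}\le G|_{W}\le c_1\,A|_{W}$, and operator monotonicity of inversion for positive definite operators produces the reversed bounds directly.)

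The main obstacle is entirely the singular, pseudo-inverse setting rather than any hard estimate: the statement would fail if $A$ and $G$ had different kernels, so the crux is the kernel-equality step together with the careful verification that all four quadratic forms depend only on the $W$-component, so that the inequality genuinely lives on the common positive definite block. Once restricted to $W$, the conclusion is the standard fact that inverting a two-sided spectral bound swaps and reciprocates the constants, which the variational identity makes transparent and scale-invariant.
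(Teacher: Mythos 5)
Your proof is correct, but it follows a genuinely different route from the paper's. Both arguments begin identically—observing that the spectral equivalence forces $\operatorname{Ker}(A)=\operatorname{Ker}(G)$ and that, by symmetry, the ranges coincide, so everything can be tested on the common range $W$. From there the paper proceeds operator-theoretically: it changes variables by $\bw=\big(A^\dagger\big)^{1/2}\bv$ to read the upper bound of \eqref{sp} as $\|G^{1/2}\big(A^\dagger\big)^{1/2}\|^2\le c_1$, invokes the identity $\|M\|=\|M^T\|$ to flip the order of the factors, and then tests $\|\big(A^\dagger\big)^{1/2}G^{1/2}\|$ on vectors of the form $\big(G^\dagger\big)^{1/2}\bu$ to obtain $c_1^{-1}(A^\dagger\bu,\bu)\le(G^\dagger\bu,\bu)$, with the other bound by symmetry. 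You instead use the convex-duality (Fenchel/Legendre) characterization
\begin{equation*}
(A^{\dagger}\bw,\bw)=\sup_{\bv}\big(2(\bw,\bv)-(A\bv,\bv)\big),\qquad \bw\in\operatorname{range}(A),
\end{equation*}
under which inverting a two-sided bound and reciprocating the constants becomes a one-line monotonicity-of-suprema argument. Each approach has its advantages: the paper's proof stays entirely at the level of operator norms and needs only the existence of PSD square roots plus $\|M\|=\|M^T\|$, but the flip of factors is the slick, slightly opaque step; your proof is more transparent about \emph{why} the constants swap (the supremum over $\bv$ is order-reversing in the quadratic form and homogeneous under scaling), at the cost of having to justify the variational identity in the semidefinite setting and to verify carefully that all four quadratic forms depend only on the $W$-component—which you do. Your parenthetical remark that one could alternatively invoke operator monotonicity of inversion on the positive definite block is also a valid third route. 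One small point worth making explicit in your kernel step: $(G\bv,\bv)=0$ implies $G\bv=0$ because $(G\bv,\bv)=\|G^{1/2}\bv\|^2$, so $G^{1/2}\bv=0$ and hence $G\bv=0$; this is standard but is the place where positive semidefiniteness is genuinely used.
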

\begin{proof}
Observe that the spectral equivalence given in~\eqref{sp} implies that 
$A$ and $G$ have the same null-space (and also same range, because they are
symmetric). Also, note that, if $\bv$ is in this null
space, then~\eqref{spdag} trivially holds. Thus, without loss of
generality, we restrict  our considerations below to $\bv$ from the range of $G$ and $A$. 

After change of variables $\bw=\big( A^{\dagger} \big)^{1/2}\bv$
from the upper bound in \eqref{sp} we may conclude that 
\[
\frac{\| G^{1/2}\big(A^\dagger\big)^{1/2}\bw \|^2}{\|\bw\|^2}\le c_1, 
\quad\mbox{and hence,}\quad
\| G^{1/2}\big(A^\dagger\big)^{1/2}\|^2\le c_1. 
\]
Since $ G^{1/2}\big(A^\dagger\big)^{1/2} =
\left( \big( A^\dagger \big)^{1/2}G^{1/2}\right)^T$, we obtain that  
$\|G^{1/2} \big( A^\dagger \big)^{1/2}\| = \| \big(A^\dagger\big)^{1/2} G^{1/2}\|$. 
Using this identity, the estimate above, we have for all and all $\bu$
and all $\bw=[G^\dagger]^{1/2}\bu$:
\[
c_1\ge 
\| \big( A^\dagger \big)^{1/2} G^{1/2}\|^2\ge 
\frac{\| \big( A^\dagger \big)^{1/2} G^{1/2}\bw\|^2}{\|\bw\|^2},
\quad\mbox{and hence,}\quad
c_1\ge 
\frac{\| \big( A^\dagger \big)^{1/2} \bu\|^2}{\| \big( G^{\dagger} \big)^{1/2}\bu\|^2}.
\]
The estimate given above clearly implies that 
$c_1^{-1}(A^\dagger \bu,\bu) \le (G^\dagger \bu, \bu)$, and this is the lower bound
in~\eqref{spdag}.  The upper bound in~\eqref{spdag} follows by
interchanging the roles of $G$ and $A$ and basically repeating the
same argument.
\end{proof}
The elementary results in the next proposition are used later in the
proof of Lemma~\ref{lem_BkAk}.
\renewcommand{\labelenumi}{(\roman{enumi})}
\begin{proposition}\label{prop:qtheta}
  Let  $\theta\in[0,1]$ and define
  $q(t;\theta)=\dfrac4{\theta+1}(1-\dfrac{t}{\theta+1})$ and
  $\widetilde{q}(t;\theta)=tq(t;\theta)$. Then,
\begin{enumerate}
\parskip=3pt
\item $\displaystyle\max_{t\in [\theta,1]} \widetilde{q}(t;\theta) =1$;
\item $\displaystyle\min_{t\in [\theta,1]} \widetilde{q}(t;\theta) =
  \widetilde{q}(\theta;\theta)= \widetilde{q}(1;\theta)$ ;
\item  $\dfrac{d\widetilde{q}(1;\theta)}{d\theta} \ge 0$ (monotonicity). 
\end{enumerate}
\end{proposition}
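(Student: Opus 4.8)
The plan is to treat $\widetilde{q}(\,\cdot\,;\theta)$ as a concrete quadratic in $t$ and exploit that it is a concave (downward-opening) parabola whose vertex sits exactly at the midpoint of the interval $[\theta,1]$. Expanding the definition gives
\[
\widetilde{q}(t;\theta) = t\,q(t;\theta) = \frac{4t}{\theta+1} - \frac{4t^2}{(\theta+1)^2},
\]
whose leading coefficient $-4/(\theta+1)^2$ is negative, so the graph is concave in $t$. Setting $\partial_t \widetilde{q}(t;\theta)=0$ yields the unique critical point $t^\ast = (\theta+1)/2$.

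First I would dispatch (i). Direct substitution gives $\widetilde{q}(t^\ast;\theta)=1$. The one point that genuinely must be checked — and this is really the crux of the whole proposition — is that $t^\ast$ lies in $[\theta,1]$: both required inequalities $\theta \le (\theta+1)/2$ and $(\theta+1)/2 \le 1$ reduce to $\theta \le 1$, which holds by hypothesis. Since the parabola is concave and its global maximizer falls inside the interval, the maximum over $[\theta,1]$ equals $\widetilde{q}(t^\ast;\theta)=1$. For (ii), the same concavity forces the minimum over $[\theta,1]$ to be attained at an endpoint; the key observation is that $t^\ast=(\theta+1)/2$ is precisely the \emph{midpoint} of $[\theta,1]$, so the two endpoints are equidistant from the axis of symmetry and therefore carry equal values. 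This gives $\widetilde{q}(\theta;\theta)=\widetilde{q}(1;\theta)$, both equal to the minimum. I would also record the explicit common value $\widetilde{q}(1;\theta)=\widetilde{q}(\theta;\theta)=4\theta/(\theta+1)^2$ by evaluating at the endpoints, since it feeds directly into (iii).

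Finally, (iii) is a one-line calculus check: differentiating $\widetilde{q}(1;\theta)=4\theta/(\theta+1)^2$ in $\theta$ gives
\[
\frac{d\widetilde{q}(1;\theta)}{d\theta} = \frac{4(1-\theta)}{(\theta+1)^3},
\]
which is nonnegative on $[0,1]$ since $1-\theta\ge 0$ and $(\theta+1)^3>0$. I do not expect any real obstacle here: the entire proposition hinges on the single geometric fact that the vertex of the parabola coincides with the midpoint of $[\theta,1]$, an observation which simultaneously pins the maximum at $1$, forces the equality of the two endpoint values, and locates the minimum there; everything else is elementary computation.
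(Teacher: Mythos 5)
Your proof is correct and is essentially the paper's own argument: the identity $\widetilde{q}(t;\theta)=1-\left(2t/(\theta+1)-1\right)^2$ used in the paper is exactly the completed-square form of your concave parabola with vertex at the midpoint $(\theta+1)/2$ of $[\theta,1]$, which yields (i) and (ii) the same way, and your derivative $4(1-\theta)/(\theta+1)^3$ for (iii) is the same quantity the paper writes as $\frac{4}{(\theta+1)^2}\left(\frac{2}{\theta+1}-1\right)\ge 0$.
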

\begin{proof}
  The proof of  (i) and (ii) follow from the identity
  $\widetilde{q}(t;\theta)=1-\big( 2t/(\theta+1)-1 \big)^2$. 
The proof of (iii) is also straightforward and follows from the fact
that $\theta\in [0,1]$ and hence
\[
\frac{d\widetilde{q}(1;\theta)}{d\theta} 
=
\frac4{(\theta+1)^{2}} \left( \frac2{\theta+1}-1 \right) \ge 0.
\qedhere
\]
\end{proof}
Next we derive estimates for the growth of the terms in a 
sequence, recursively defined using $\widetilde{q}(1;\theta)$, 
which we use later to bound the convergence rate. 
\begin{proposition}\label{prop:sequence}
  Let, $1\le c \le 4$ be a given
  constant, and  $q(t;\theta)=\dfrac4{\theta+1}(1-\dfrac{t}{\theta+1})$ and
  $\widetilde{q}(t;\theta)=tq(t;\theta)$ (as in Proposition~\ref{prop:qtheta}).
Define, 
\begin{equation}\label{eq:sequence}
\theta_1=1; \qquad \theta_{k+1} = \frac{1}{c}\widetilde{q}(1;\theta_k),
\quad\mbox{for}\quad
k=1,2,\ldots
\end{equation}
Then, the following are true for $k=1,2,\ldots$:
\begin{enumerate}
\parskip=3pt
\item $\dfrac{2}{\sqrt{c}}-1 \le \theta_{k+1}\le\theta_k\le 1$ ; 
\item $\theta_k \ge  \max \left\{\dfrac2{\sqrt{c}}-1,\dfrac{1}{2k-1+\log k}\right\}$. 
\end{enumerate}
\end{proposition}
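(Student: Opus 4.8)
The plan is first to turn the implicit recursion into an explicit scalar iteration. Using the identity $\widetilde{q}(t;\theta)=1-(2t/(\theta+1)-1)^2$ from Proposition~\ref{prop:qtheta}, one evaluates $\widetilde{q}(1;\theta)=4\theta/(1+\theta)^2$, so that \eqref{eq:sequence} becomes $\theta_{k+1}=f_c(\theta_k)$ with $f_c(\theta):=\frac{4\theta}{c(1+\theta)^2}$. The crucial observation is that the number $\theta^{\ast}:=\frac{2}{\sqrt{c}}-1$ appearing in the bounds is precisely the unique nonzero fixed point of $f_c$: solving $f_c(\theta)=\theta$ gives $(1+\theta)^2=4/c$. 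Since $1\le c\le 4$, we have $\theta^{\ast}\in[0,1]$. A short computation gives $f_c'(\theta)=\frac{4}{c}\frac{1-\theta}{(1+\theta)^3}\ge 0$ on $[0,1]$, so $f_c$ is nondecreasing there, and $f_c(\theta)\le\theta$ exactly when $(1+\theta)^2\ge 4/c$, i.e. when $\theta\ge\theta^{\ast}$.

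For part (i) I would argue by induction that $\theta^{\ast}\le\theta_{k+1}\le\theta_k\le 1$. The base case is $\theta_1=1$, which lies in $[\theta^{\ast},1]$ because $c\ge 1$. Assuming $\theta^{\ast}\le\theta_k\le 1$, monotonicity of $f_c$ together with $f_c(\theta^{\ast})=\theta^{\ast}$ give $\theta_{k+1}=f_c(\theta_k)\ge f_c(\theta^{\ast})=\theta^{\ast}$, while $\theta_k\ge\theta^{\ast}$ gives $\theta_{k+1}=f_c(\theta_k)\le\theta_k\le 1$ by the sign criterion above. This closes the induction and yields (i).

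For part (ii) the first entry of the maximum is immediate from (i). For the second entry I would first reduce to the extreme case $c=4$: since $f_c(\theta)$ is nonincreasing in $c$ and nondecreasing in $\theta$ on $[0,1]$, an easy induction (with the common starting value $\theta_1=1$) shows $\theta_k(c)\ge\theta_k(4)$ whenever $c\le 4$. Thus it suffices to bound $\theta_k$ from below for $c=4$, where the recursion collapses to $\theta_{k+1}=\theta_k/(1+\theta_k)^2$. Setting $u_k:=1/\theta_k$ linearizes this into $u_{k+1}=u_k+2+1/u_k$ with $u_1=1$, which telescopes to $u_k=2k-1+\sum_{j=1}^{k-1}1/u_j$. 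In particular $u_k\ge 2k-1$, so $1/u_k\le 1/(2k-1)$, and I would prove $u_k\le 2k-1+\log k$ by induction: inserting $u_{k+1}=u_k+2+1/u_k$ together with the hypothesis, the inductive step reduces to the scalar inequality $\frac{1}{2k-1}\le\log\bigl(1+\tfrac1k\bigr)$. Recalling $\theta_k=1/u_k$ then gives $\theta_k\ge 1/(2k-1+\log k)$, and combining with the $c$-reduction establishes (ii).

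The main obstacle is the last scalar inequality, together with fixing the base of the logarithm: with $\log=\log_2$ (natural in this $n=2^J$ setting) the inequality $\frac{1}{2k-1}\le\log_2(1+1/k)$ holds for all $k\ge 1$, with equality at $k=1$, so the induction is sharp and closes exactly; this can be checked by elementary calculus on the difference, which vanishes at $k=1$ and stays nonnegative thereafter. The only other point requiring care is the monotonicity-in-$c$ reduction, which relies on the two-sided monotonicity of $f_c$ established in the first paragraph and on all iterates remaining in $[0,1]$, as guaranteed by part (i).
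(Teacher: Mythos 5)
Your proof is correct and, in its skeleton, follows the same route as the paper: part (i) is the paper's argument in different clothing (the paper invokes the monotonicity of $\widetilde q(1;\cdot)$ from Proposition~\ref{prop:qtheta}(iii) and computes $\theta_{k+1}-\theta_k=\frac{\theta_k}{(\theta_k+1)^2}\left(\frac4c-(\theta_k+1)^2\right)\le 0$, which is precisely your criterion $f_c(\theta)\le\theta\iff\theta\ge\theta^\ast$), and part (ii) uses the same two devices --- comparison with the extremal case $c=4$ and the reciprocal substitution ($\zeta_k=1/\theta_k$ in the paper, your $u_k$), giving $\zeta_{k+1}=\frac c4(\zeta_k+2+1/\zeta_k)$. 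The genuine difference is the endgame of (ii): the paper bounds the $c=4$ sequence crudely, $s_k=2k-1+\sum_{i=1}^{k-1}1/s_i\le 2k+\ln k+1$, which yields only $\theta_k\ge 1/(2k+\ln k+1)$ --- the form actually quoted later in Lemma~\ref{lem_BkAk} --- and therefore does not literally establish item (ii) as stated. You instead close the sharp induction $u_k\le 2k-1+\log_2 k$, whose inductive step is exactly the scalar inequality $\frac1{2k-1}\le\log_2(1+1/k)$; this does hold for all $k\ge1$ (equality at $k=1$; for $k\ge2$ use, e.g., $\ln(1+x)\ge\frac{2x}{2+x}$ with $x=1/k$), and your bound is attained at $k=1,2$. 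So your argument proves the proposition exactly as written, provided $\log$ is read as $\log_2$ --- a reading that is in fact forced, since with the natural logarithm the claim already fails at $k=2$, $c=4$, where $\theta_2=\frac14<\frac1{3+\ln 2}$. In short: same strategy, but your version is tighter, repairs the mismatch between the paper's statement of (ii) and its own proof, and its only soft spot is deferring the scalar inequality to ``elementary calculus,'' a gap that is real but easily filled as indicated.
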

\begin{proof}
The first item (i) follows from algebraic manipulations and the
estimates given in Proposition~\ref{prop:qtheta}. 
To show that $\theta_{k+1} \leq \theta_{k}$, we assume 
that $\theta_{k}\geq 2/\sqrt{c}-1$ (which is certainly true for
$k=1$. To prove that $\theta_{k+1}\geq 2/\sqrt{c}-1$ we observer that
from $\theta_{k}\geq 2/\sqrt{c}-1$, 
the monotonicity property in Proposition~\ref{prop:qtheta}
item (iii), implies that
\[
\theta_{k+1} = \frac1c \tilde{q}(1;\theta_{k}) 
\geq 
\frac1c \tilde{q}\left(1;\frac2{\sqrt{c}}-1 \right) = \frac2{\sqrt{c}}-1 .
\]
Using again that $\theta_{k}\geq 2/\sqrt{c}-1$ gives aso that
\[
\theta_{k+1}-\theta_{k}=
\frac{\theta_{k}}{(\theta_{k}+1)^{2}}
\left(\frac4c-(\theta_{k}+1)^{2}\right)\leq 0 .
\]

The proof of the second item (ii) is a bit more involved. We prove this
item by deriving an upper bound on 
$\zeta_k=\frac{1}{\theta_k}$. 
Observe that, from the
recurrence relation for $\theta_k$ we have 
\begin{eqnarray}\label{recursive-formula-zeta-k}
\zeta_{k+1}= 
\frac c4(\zeta_{k}+2+\frac1{\zeta_{k}})
,\qquad
\zeta_{1}=1. 
\end{eqnarray}
We first show that the faster growing sequence above is for $c=4$. 
Indeed, let 
\begin{equation*}
s_{k+1} = s_{k}+2+\frac1{s_{k}}
,\qquad 
s_{1}=1.
\end{equation*}
A standard induction argument shows that 
\[
\zeta_{k} \leq s_{k} 
,\qquad \text{ and } \qquad
2k-1 \leq s_{k}
,\qquad 
\forall k .
\]
Expand $s_{k}$ by the recursive formula and we have 
\[
s_{k}
=
s_{1}+2(k-1)+\sum_{i=1}^{k-1}\frac1{s_{i}}
\leq
1+2(k-1)+\sum_{i=1}^{k-1}\frac1{2i-1}
\leq
2k+\ln k +1 ,
\]
which provides an upper bound of $\zeta_{k}$, and 
hence $1/(2k+\ln k-1)$ is a lower bound of $\theta_{k}$. 
\end{proof}

The following Lemma provides a spectral equivalence relation
between $\widehat G_{k}^{\dagger}$ and $\widehat B_{k}^{-1}$.
\begin{lemma}\label{lem_GkBk}
If  $\lambda_{1} \leq \lambda(B_{k}^{-1} A_{k}) \leq \lambda_{2}$
and $t q_{k}(t)>0$ for $\lambda_{1}\leq t \leq \lambda_{2}$, then
\begin{eqnarray} \label{b-1/m-1}
\min\{1, \min_{\lambda_{1}\leq t \leq \lambda_{2}}t q_{k}(t)\}
\leq
\frac{(\widehat B_{k+1}^{-1}\bv, \bv)} { ( \widehat G_{k+1}^{\dagger}\bv, \bv ) }
\leq
\max\{1, \max_{\lambda_{1}\leq t \leq \lambda_{2}}t q_{k}(t)\}, \qquad\qquad \\
\forall \bv:(\bv,\widehat\one)=0,\quad
k=1,\dots,J-1 . \nonumber
\end{eqnarray}
\end{lemma}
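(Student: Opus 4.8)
The plan is to exploit that $\widehat B_{k+1}^{-1}$ and $\widehat G_{k+1}^{\dagger}$ are both congruence transforms $L_{k+1}^{-T}(\cdot)L_{k+1}^{-1}$ of block--diagonal operators whose $(1,1)$ blocks coincide (both equal $(Y_{k+1}^{T}A_{k+1}Y_{k+1})^{-1}$), so that the entire comparison reduces to the $(2,2)$ blocks, namely to comparing $B_k^{-1}q_k(A_kB_k^{-1})$ with $A_k^{\dagger}$. The glue at the end is the elementary fact that if $a\ge 0$, $c\ge 0$ and $mc\le b\le Mc$, then $\frac{a+b}{a+c}\in[\min\{1,m\},\max\{1,M\}]$.

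The first, and hardest, step is to justify the representation
\[
\widehat G_{k+1}^{\dagger}
=
L_{k+1}^{-T}
\begin{pmatrix}
(Y_{k+1}^{T}A_{k+1}Y_{k+1})^{-1} & 0\\[2pt]
0 & \sigma^{-1}A_k^{\dagger}
\end{pmatrix}
L_{k+1}^{-1}
\]
\emph{on the subspace} $\{\bv:(\bv,\widehat\one)=0\}$, since in general $(LML^{T})^{\dagger}\neq L^{-T}M^{\dagger}L^{-1}$ for singular $M$. Here $\widehat\one=\begin{pmatrix}0\\ \one_k\end{pmatrix}$ because $P_{k+1}\one_k=\one_{k+1}$ and hence $Y_{k+1}^{T}A_{k+1}P_{k+1}\one_k=0$; the same identity yields $L_{k+1}^{-T}\widehat\one=\widehat\one$, whence $\ker\widehat G_{k+1}=\operatorname{span}(\widehat\one)$ and the constraint $(\bv,\widehat\one)=0$ is exactly $\bv\in\operatorname{range}(\widehat G_{k+1})$. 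I would then check directly that $\bu:=L_{k+1}^{-T}D_G L_{k+1}^{-1}\bv$ (with $D_G$ the displayed block--diagonal matrix) satisfies $\widehat G_{k+1}\bu=\bv$ and $(\bu,\widehat\one)=0$: the constraint $(\bv,\widehat\one)=0$ forces the lower block of $L_{k+1}^{-1}\bv$ to be orthogonal to $\one_k$, so that $A_kA_k^{\dagger}$ acts as the identity on it and the projection hidden in $MD_G$ is invisible. Uniqueness of the minimum--norm solution then gives $\widehat G_{k+1}^{\dagger}\bv=\bu$.

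Next I would reduce the $(2,2)$ comparison to a scalar average. Writing $S=(B_k^{-1})^{1/2}$ on $\operatorname{range}(A_k)=\one_k^{\perp}$ and $T=SA_kS$, a short manipulation using $A_kB_k^{-1}=S^{-1}TS$ gives $B_k^{-1}q_k(A_kB_k^{-1})=S\,q_k(T)\,S$ and $A_k^{\dagger}=S\,T^{-1}S$ on the range. With $\bz=S\bw_2$ and an orthonormal eigenbasis $T u_i=t_i u_i$, $t_i\in[\lambda_1,\lambda_2]$, expanding $\bz=\sum_i\beta_i u_i$ yields
\[
\frac{(B_k^{-1}q_k(A_kB_k^{-1})\bw_2,\bw_2)}{(A_k^{\dagger}\bw_2,\bw_2)}
=
\frac{\sum_i \beta_i^{2}\,q_k(t_i)}{\sum_i \beta_i^{2}\,t_i^{-1}}
=
\frac{\sum_i \beta_i^{2}t_i^{-1}\,\bigl(t_iq_k(t_i)\bigr)}{\sum_i \beta_i^{2}t_i^{-1}},
\]
a weighted average of the values $t_iq_k(t_i)$ with nonnegative weights $\beta_i^{2}t_i^{-1}$ (positive since $t_i>0$). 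Hence this ratio lies in $[\min_{[\lambda_1,\lambda_2]}tq_k(t),\,\max_{[\lambda_1,\lambda_2]}tq_k(t)]$.

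Finally I would assemble the two pieces. Putting $\bw=L_{k+1}^{-1}\bv=\begin{pmatrix}\bw_1\\ \bw_2\end{pmatrix}$ and using the first step,
\[
\frac{(\widehat B_{k+1}^{-1}\bv,\bv)}{(\widehat G_{k+1}^{\dagger}\bv,\bv)}
=
\frac{a+b}{a+c},\quad
a=\bigl((Y_{k+1}^{T}A_{k+1}Y_{k+1})^{-1}\bw_1,\bw_1\bigr)\ge 0,
\]
with $b=\sigma^{-1}(B_k^{-1}q_k(A_kB_k^{-1})\bw_2,\bw_2)$ and $c=\sigma^{-1}(A_k^{\dagger}\bw_2,\bw_2)$. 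For $\bw_2\neq 0$ one has $c>0$ (as $\bw_2\perp\one_k$), and the third step gives $b/c\in[\min tq_k,\max tq_k]$; the elementary inequality then delivers exactly \eqref{b-1/m-1}, while $\bw_2=0$ makes the ratio equal to $1$, which lies in the claimed interval. The main obstacle is the first step: correctly handling the Moore--Penrose pseudoinverse of the singular congruence $L_{k+1}M L_{k+1}^{T}$ and verifying that it factors as claimed precisely on $\widehat\one^{\perp}$.
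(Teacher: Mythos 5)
Your proposal is correct and follows essentially the same route as the paper's own proof: a spectral weighted-average bound on the ratio $\bigl(B_k^{-1}q_k(A_kB_k^{-1})\bw,\bw\bigr)/\bigl(A_k^{\dagger}\bw,\bw\bigr)$ (the paper symmetrizes with $A_k^{1/2}$, writing $Z=A_k^{1/2}B_k^{-1}A_k^{1/2}$, where you use $S=(B_k^{-1})^{1/2}$ --- an immaterial difference), followed by the same block-diagonal congruence assembly via the elementary $(a+b)/(a+c)$ inequality. The only substantive divergence is your first step: the paper silently uses the factorization $\widehat G_{k+1}^{\dagger}=L_{k+1}^{-T}\operatorname{diag}\bigl((Y_{k+1}^{T}A_{k+1}Y_{k+1})^{-1},\sigma^{-1}A_k^{\dagger}\bigr)L_{k+1}^{-1}$ on $\widehat\one^{\perp}$ (indeed it writes $A_k^{-1}$ in the denominator block as if $A_k$ were invertible), whereas you correctly flag that pseudo-inverses do not commute with congruence in general and verify the identity on the constrained subspace --- a welcome tightening of the same argument rather than a different approach.
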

\begin{proof}
For any vector $\bv$, 
$$
\frac {\big( q_{k}(A_{k}B_{k}^{-1})\bv, B_{k}^{-1}\bv \big)}{(A_{k}^{\dagger}\bv, \bv)}
=
\frac{\big( 
q_{k}(A_{k}^{\frac12}B_{k}^{-1}A_{k}^{\frac12})(A_{k}^{\frac12})^{\dagger}\bv, 
A_{k}^{\frac12}B_{k}^{-1}A_{k}^{\frac12}(A_{k}^{\frac12})^{\dagger}\bv \big)}{(A_{k}^{\dagger}\bv, \bv)} \\
=
\frac{\big( q_{k}(Z)\bw, Z\bw \big)}{(\bw, \bw)},
$$
where $\bw=(A_{k}^{\frac12})^{\dagger}\bv$ 
and $Z=A_{k}^{\frac12}B_{k}^{-1}A_{k}^{\frac12}$.
Further, since $Z$ has the same eigenvalues as $B_{k}^{-1} A_{k}$,  we
conclude that 
$$
\min_{\lambda_{1}\leq t \leq \lambda_{2}}t q_{k}(t)
\leq 
\frac {\big( q_{k}(A_{k}B_{k}^{-1})\bv, B_{k}^{-1}\bv \big)}{(A_{k}^{\dagger}\bv, \bv)}
\leq 
\max_{\lambda_{1}\leq t \leq \lambda_{2}}t q_{k}(t) .
$$
This implies that for any $\bx$ and $\by$, 
\begin{eqnarray*}
&&
\frac {
\begin{pmatrix}
\bx \\ \by
\end{pmatrix}^T
\begin{pmatrix}
(Y_{k+1}^{T}A_{k+1}Y_{k+1})^{-1} & 0 \\
0 & \sigma^{-1} B_{k}^{-1}q_{k}(A_{k}B_{k}^{-1})
\end{pmatrix}
\begin{pmatrix}
\bx \\ \by
\end{pmatrix} 
}{
\begin{pmatrix}
\bx \\ \by
\end{pmatrix}^T
\begin{pmatrix}
(Y_{k+1}^{T}A_{k+1}Y_{k+1})^{-1} & 0 \\
0 & \sigma^{-1} A_{k}^{-1}
\end{pmatrix}
\begin{pmatrix}
\bx \\ \by
\end{pmatrix}
} \\
&=&
\frac
{\big( (Y_{k+1}^{T}A_{k+1}Y_{k+1})^{-1}\bx, \bx \big)
+
\sigma^{-1}(B_{k}^{-1}q(A_{k}B_{k}^{-1})\by, \by)}
{\big( (Y_{k+1}^{T}A_{k+1}Y_{k+1})^{-1}\bx, \bx \big)
+
\sigma^{-1}(A_{k}^{-1}\by, \by)} \\
&\in& 
\Big[
\min\{1, \min_{\lambda_{1}\leq t \leq \lambda_{2}}t q(t)\}, 
\max\{1, \max_{\lambda_{1}\leq t \leq \lambda_{2}}t q(t)\}
\Big] ,
\end{eqnarray*}
and, hence, by using the definition of $\widehat G_{k}$ and $\widehat B_{k}^{-1}$,
it follows that 
\begin{equation}\label{rel_GkBk}
\frac {(\widehat B_{k+1}^{-1}\bv, \bv)}{(\widehat G_{k+1}^{\dagger}\bv, \bv)}
\in
\Big[
\min\{1, \min_{\lambda_{1}\leq t \leq \lambda_{2}}t q(t)\}, 
\max\{1, \max_{\lambda_{1}\leq t \leq \lambda_{2}} t q(t)\}
\Big] .
\qedhere
\end{equation}
\end{proof}

Combining the above lemma with Theorem (\ref{est_Ghat}) 
the spectral equivalence between $B_{k}^{-1}$ and $A_{k}^{\dagger}$,
$k=1,\ldots,J$  follows and is shown in the next Lemma.  

\begin{lemma}\label{lem_BkAk}
  Assume that the two level preconditioner $G_k$ satisfies
\begin{equation}\label{est_AkGk}
(\widehat A_{k}\bv, \bv)
\leq (\widehat G_{k}\bv, \bv) 
\leq c_{g} (\widehat A_{k}\bv, \bv)
, \quad 
\forall \bv \mbox{ and } k=2,\ldots,J.
\end{equation}
with constant $c_{g}$, such that $1\leq c_{g}\le 4$. 
Define 
\begin{equation}\label{tq(t)}
q_{k}(t)=q(t,\theta_k), 
\end{equation}
where $\theta_k$ are defined as 
\begin{equation*}
\theta_1=1; \qquad \theta_{k+1} = 
\frac{1}{c_g}\widetilde{q}(1;\theta_k)=\frac{t}{c_g}q_k(1).
\end{equation*}
Then, the following inequalities hold for all $\bv:(\bv,\one)=0 \text{ and } k=1,\dots,J$.
\begin{eqnarray} 
&&\theta_k
\leq 
\frac{(B_{k}^{-1}\bv, \bv)}{(A_{k}^{\dagger}\bv, \bv)}
\leq 
1, \label{m-1/a-1}\\
&& \max \left\{\frac2{\sqrt{c}}-1,\frac{1}{2k+\ln k+1}\right \}
\leq \frac{(B_{k}^{-1}\bv, \bv)}{(A_{k}^{\dagger}\bv, \bv)}.
\label{m-1/a-20}
\end{eqnarray}
\end{lemma}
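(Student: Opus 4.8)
The plan is to establish the two-sided bound \eqref{m-1/a-1} by induction on $k$ and then to read off \eqref{m-1/a-20} by substituting the explicit lower bound for $\theta_k$ from Proposition~\ref{prop:sequence}(ii). Note first that, since $1\le c_g\le 4$, the sequence $(\theta_k)$ defined here is exactly the one analysed in Proposition~\ref{prop:sequence} with $c=c_g$, so Proposition~\ref{prop:sequence}(i) guarantees $\theta_k\in[0,1]$ throughout and Proposition~\ref{prop:qtheta} is applicable at every level. The base case $k=1$ is immediate: $B_1^{-1}=A_1^\dagger$ forces the ratio in \eqref{m-1/a-1} to equal $1$, which matches $\theta_1=1$. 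The content is the inductive step, which I would set up as a composition of three spectral-equivalence relations whose constants multiply to reproduce the recursion $\theta_{k+1}=\frac{1}{c_g}\widetilde q(1;\theta_k)$ of \eqref{eq:sequence}.

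First I would convert the inductive hypothesis $\theta_k\le (B_k^{-1}\bv,\bv)/(A_k^\dagger\bv,\bv)\le 1$ (for all $\bv$ orthogonal to $\operatorname{Ker}A_k=\operatorname{span}(\one)$) into the eigenvalue localization $\lambda(B_k^{-1}A_k)\in[\theta_k,1]$; this is the standard correspondence, obtained by the substitution $\bv=A_k\bu$ and the identity $A_k^\dagger A_k=A_kA_k^\dagger=$ (orthogonal projection onto $\operatorname{range}A_k$). With these bounds I can apply Lemma~\ref{lem_GkBk} with $\lambda_1=\theta_k$, $\lambda_2=1$, and $q_k(t)=q(t;\theta_k)$. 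The required positivity $tq_k(t)>0$ on $[\theta_k,1]$ follows from the square representation $\widetilde q(t;\theta_k)=1-\big(2t/(\theta_k+1)-1\big)^2$ together with $\theta_k>0$, while Proposition~\ref{prop:qtheta} gives $\max_{[\theta_k,1]}\widetilde q=1$ and $\min_{[\theta_k,1]}\widetilde q=\widetilde q(1;\theta_k)\le 1$. Hence the conclusion of Lemma~\ref{lem_GkBk} collapses to
\[
\widetilde q(1;\theta_k)\,(\widehat G_{k+1}^{\dagger}\bv,\bv)\le(\widehat B_{k+1}^{-1}\bv,\bv)\le(\widehat G_{k+1}^{\dagger}\bv,\bv),\qquad (\bv,\widehat\one)=0.
\]

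Next I would bring in the two-level estimate \eqref{est_AkGk} at level $k+1$ and apply Proposition~\ref{prop:spdag} (with $c_0=1$, $c_1=c_g$) to pass to pseudoinverses, giving $c_g^{-1}(\widehat A_{k+1}^{\dagger}\bv,\bv)\le(\widehat G_{k+1}^{\dagger}\bv,\bv)\le(\widehat A_{k+1}^{\dagger}\bv,\bv)$. Multiplying this chain with the previous display — legitimate since both are inequalities between quadratic forms evaluated at the same $\bv$ — produces
\[
\theta_{k+1}(\widehat A_{k+1}^{\dagger}\bv,\bv)\le(\widehat B_{k+1}^{-1}\bv,\bv)\le(\widehat A_{k+1}^{\dagger}\bv,\bv),\qquad \theta_{k+1}=\tfrac{1}{c_g}\widetilde q(1;\theta_k),
\]
for all $\bv$ with $(\bv,\widehat\one)=0$; this is precisely the level-$(k+1)$ estimate, but expressed in the hierarchical coordinates.

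The last step, which I expect to be the main obstacle, is to descend from the hatted quantities to $B_{k+1}^{-1}$ and $A_{k+1}^{\dagger}$ without loss in the constants. Here I would use that $(Y_{k+1},P_{k+1})$ is orthogonal (its columns $\be_i\pm\be_j$ are mutually orthogonal of equal length), so that the relations $\widehat A_{k+1}=(Y_{k+1},P_{k+1})^{T}A_{k+1}(Y_{k+1},P_{k+1})$ and $B_{k+1}^{-1}=(Y_{k+1},P_{k+1})^{T}\widehat B_{k+1}^{-1}(Y_{k+1},P_{k+1})$ realize an isometry between the nodal and hierarchical representations. The care needed is twofold: because $A_{k+1}$ and $\widehat A_{k+1}$ are only semidefinite, I must verify that this orthogonal map carries $\operatorname{Ker}A_{k+1}=\operatorname{span}(\one)$ onto $\operatorname{Ker}\widehat A_{k+1}=\operatorname{span}(\widehat\one)$, so that it intertwines the two pseudoinverses on their respective ranges; and the constraint $(\bv,\one)=0$ must match $(\bw,\widehat\one)=0$, which holds because $\widehat\one$ is defined by $(Y_{k+1},P_{k+1})\widehat\one=\one$. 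An isometry preserves every ratio of quadratic forms, so the bound on $(\widehat B_{k+1}^{-1}\bv,\bv)/(\widehat A_{k+1}^{\dagger}\bv,\bv)$ transfers verbatim to $(B_{k+1}^{-1}\bv,\bv)/(A_{k+1}^{\dagger}\bv,\bv)$, giving $\theta_{k+1}\le (B_{k+1}^{-1}\bv,\bv)/(A_{k+1}^{\dagger}\bv,\bv)\le 1$ for $(\bv,\one)=0$ and closing the induction. Finally, inserting $\theta_k\ge\max\{2/\sqrt{c_g}-1,\,1/(2k+\ln k+1)\}$ from Proposition~\ref{prop:sequence}(ii) into \eqref{m-1/a-1} yields \eqref{m-1/a-20}.
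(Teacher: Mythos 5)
Your proposal is correct and follows essentially the same route as the paper's proof: induction on the level, Lemma~\ref{lem_GkBk} for the $\widehat B_{k+1}^{-1}$ versus $\widehat G_{k+1}^{\dagger}$ comparison, Proposition~\ref{prop:spdag} applied to \eqref{est_AkGk} for the $\widehat G_{k+1}^{\dagger}$ versus $\widehat A_{k+1}^{\dagger}$ comparison, Propositions~\ref{prop:qtheta} and \ref{prop:sequence} to identify the resulting constant with $\theta_{k+1}$, and the change of basis $(Y_{k+1},P_{k+1})$ to return to the nodal quantities. The only difference is that you spell out steps the paper leaves implicit (the passage from the inductive quadratic-form bound to the eigenvalue localization $\lambda(B_k^{-1}A_k)\in[\theta_k,1]$ needed by Lemma~\ref{lem_GkBk}, the positivity of $tq_k(t)$, and the kernel matching under the orthogonal transformation), which strengthens rather than alters the argument.
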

\begin{proof}
  We give a proof of \eqref{m-1/a-1} by induction. Clearly, for $k=1$,
  $B^{-1}_1=A_1^\dagger$, and hence, \eqref{m-1/a-1} holds. We assume
  that the inequalities \eqref{m-1/a-1} hold for $k=l$ and we aim to
  prove them for $k=l+1$. For all $\bv$ such that $(\bv,\one)=0$ we
  have
$$
\frac {(\widehat B_{l+1}^{-1}\bv, \bv)}{(\widehat
  A_{l+1}^{\dagger}\bv, \bv)} = 
\frac {(\widehat G^\dagger_{l+1}\bv, \bv)}{(\widehat A^\dagger_{l+1}\bv, \bv)}
\frac {(\widehat B_{l+1}^{-1}\bv, \bv)}{(\widehat G_{l+1}^{\dagger}\bv, \bv)}
$$
Then, from \eqref{est_AkGk}, Proposition~\ref{prop:spdag} and
Proposition~\ref{lem_GkBk} (applied in that order) it follows that    
\[
\frac1{c_g}\le 
\frac{(\widehat G^\dagger_{l+1}\bv, \bv)}{(\widehat
  A^\dagger_{l+1}\bv, \bv)}
\le 1,\quad\mbox{and}\quad
\min\{1, \min_{ t\in[\theta_k,1]}t q_{k}(t)\}\le
\frac{(\widehat B_{l+1}^{-1}\bv, \bv)}{(\widehat G_{l+1}^{\dagger}\bv,
  \bv)}
\le
\max\{1, \max_{ t\in[\theta_k,1]}t q_{k}(t)\}. 
\]
Next, by Proposition~\ref{prop:qtheta} and Proposition~\ref{prop:sequence} we find that
\begin{equation*}
\theta_{l+1}=\frac{1}{c_g}\min\{1, \min_{ t\in[\theta_k,1]}t q_{l}(t)\}\le
\frac{(\widehat B_{l+1}^{-1}\bv, \bv)}{(\widehat A_{l+1}^{\dagger}\bv, \bv)}
\le
\max\{1, \max_{ t\in[\theta_k,1]}t q_l(t)\}=1. 
\end{equation*}

Finally, from the definition of $B_{k}^{-1}$ and $A_{k}^{-1}$ in
terms of $\widehat B_{k}^{-1}$ and $\widehat A_{k}^{\dagger}$, it
immediately follows that
\begin{eqnarray}
\theta_k \leq \frac{(B_{k}^{-1}\bv, \bv)}{(A_{k}^{\dagger}\bv, \bv)}
=
\frac{\big( \widehat B_{k}^{-1}(Y, P)\bv, (Y, P)\bv \big)}
{\big( \widehat A_{k}^{\dagger}(Y,P)\bv, (Y, P)\bv \big)}\leq 1,\quad(\bv,\one)=0.
\end{eqnarray}
The proof of~\eqref{m-1/a-20} follows from item (ii) in Proposition~\ref{prop:sequence}.
\end{proof}

The spectrum estimate \eqref{m-1/a-1} suggests that, 
$B_{J}^{-1}$ can be used as a preconditioner of 
a Conjugate Gradient method solving a linear system whose coefficient matrix is $A_{J}$.  
It also leads to the following convergence estimate of a power method. 
\begin{theorem}\label{thm_amli_w}
Assume that there is a constant $c_g$ such that $1\leq c_{g} \leq 4$ and 
$
(\widehat A_{k}\bv, \bv) 
\leq (\widehat G_{k}\bv, \bv) 
\leq c_{g} (\widehat A_{k}\bv, \bv)$
for all $\bv$ and $k=2,\ldots,J$. 
Then 
\begin{eqnarray*}
\rho\big( (I-\Pi_{\one})(I-B_{J}^{-1}A) \big) \leq 
\min \left\{\frac{2\sqrt{c}-2}{\sqrt{c}},\frac{2k+\ln k}{2k+\ln
    k+1}\right\} < 1, 
\end{eqnarray*}
where $\Pi_{\one}$ is the $\ell_{2}$ projection to the space of constant vectors. 
\end{theorem}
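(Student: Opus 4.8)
The plan is to reduce the spectral-radius estimate to the two-sided spectral equivalence between $B_J^{-1}$ and $A^\dagger$ already obtained in Lemma~\ref{lem_BkAk}, and then translate that equivalence into eigenvalue bounds for the preconditioned operator $B_J^{-1}A$ restricted to the orthogonal complement of the constants. Concretely, applying \eqref{m-1/a-1} and \eqref{m-1/a-20} at the finest level $k=J$ gives, for every $\bv$ with $(\bv,\one)=0$,
\[
\theta_J\,(A^\dagger\bv,\bv)\le (B_J^{-1}\bv,\bv)\le (A^\dagger\bv,\bv),
\qquad
\theta_J\ge\max\!\left\{\tfrac{2}{\sqrt{c_g}}-1,\ \tfrac1{2J+\ln J+1}\right\}.
\]
Since $A$ and $B_J$ are symmetric positive semidefinite with the same null space $\operatorname{span}(\one)$, I would invoke Proposition~\ref{prop:spdag} (with the roles of the two operators interchanged) to pass from this equivalence of pseudoinverses to the equivalence $A\le B_J\le \theta_J^{-1}A$ on $\one^\perp$. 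This immediately localizes the spectrum of the generalized eigenproblem $A\bx=\lambda B_J\bx$, equivalently the eigenvalues of $B_J^{-1}A$ acting on $\one^\perp$, into the interval $[\theta_J,1]$.

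Next I would deal with the projection $\Pi_{\one}$ and the null space. Because $A\one=0$, the error propagator satisfies $(I-B_J^{-1}A)\one=\one$, while $(I-\Pi_{\one})\one=0$, so the composed operator annihilates the constants. On the other hand, $A$ maps $\R^n$ into $\one^\perp$ and $B_J^{-1}$, being symmetric with null space $\operatorname{span}(\one)$, also maps into $\one^\perp$; hence $B_J^{-1}A$ leaves $\one^\perp$ invariant and on that subspace $I-\Pi_{\one}$ acts as the identity. Consequently $(I-\Pi_{\one})(I-B_J^{-1}A)$ is block diagonal with respect to the decomposition $\R^n=\operatorname{span}(\one)\oplus\one^\perp$, vanishing on the first block and coinciding with $I-B_J^{-1}A$ on the second, so its spectral radius equals that of $I-B_J^{-1}A$ restricted to $\one^\perp$.

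The observation that closes the argument is that $B_J^{-1}A$ is self-adjoint with respect to the $A$-inner product on $\one^\perp$ (a direct computation using the symmetry of $A$ and $B_J^{-1}$), so its eigenvalues are real and, by the previous step, all lie in $[\theta_J,1]$. Therefore the eigenvalues of $I-B_J^{-1}A$ lie in $[0,1-\theta_J]$, with no cancellation since every $1-\lambda\ge 0$, yielding $\rho\big((I-\Pi_{\one})(I-B_J^{-1}A)\big)\le 1-\theta_J$. Substituting the lower bound on $\theta_J$ and using the elementary identity $1-\max\{a,b\}=\min\{1-a,1-b\}$ gives
\[
\rho\big((I-\Pi_{\one})(I-B_J^{-1}A)\big)\le
\min\!\left\{\frac{2\sqrt{c_g}-2}{\sqrt{c_g}},\ \frac{2J+\ln J}{2J+\ln J+1}\right\}<1.
\]
The main obstacle I anticipate is bookkeeping rather than conceptual: one must verify carefully that $B_J^{-1}$ inherits the null space $\operatorname{span}(\one)$ from $A$ through the recursive construction, so that the pseudoinverse manipulations of Proposition~\ref{prop:spdag} and the invariance of $\one^\perp$ are both legitimate, and that the one-sided bound $\lambda\le 1$ genuinely guarantees $|1-\lambda|=1-\lambda$ with no overshoot, so that the smallest eigenvalue $\theta_J$ alone controls the spectral radius.
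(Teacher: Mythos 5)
Your proposal is correct and follows the same route as the paper: the paper's entire proof is the one-line remark that the theorem is a direct application of Lemma~\ref{lem_BkAk}, and your argument is precisely that application spelled out --- restricting to $\one^{\perp}$, localizing the spectrum of $B_J^{-1}A$ in $[\theta_J,1]$ via the equivalence of $B_J^{-1}$ and $A^{\dagger}$, and converting $1-\theta_J$ into the stated minimum. The kernel bookkeeping you flag (that $\operatorname{Ker}(B_J^{-1})\supseteq\operatorname{span}(\one)$, which follows by induction from $B_1^{-1}=A_1^{\dagger}$ and the recursive definition) is likewise left implicit by the paper, so your write-up is, if anything, more complete than the original.
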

\begin{proof}
The proof is a directly application of the results in Lemma~\eqref{lem_BkAk}.
\end{proof}

A generalization of this estimate is given by assuming that $c_{g} < m^{2}$ for 
an integer $m$, 
in which case there exists an polynomial $q(t)$ of order $m-1$ such that 
a spectrally equivalent relation can be shown as
\begin{eqnarray*}
\frac{m^{2}-c_{g}}{(m^{2}-1)c_{g}}
\leq 
\frac{(B_{k}^{-1}\bv, \bv)}{(A_{k}^{\dagger}\bv, \bv)}
\leq 
1
, \quad 
\forall \bv:(\bv,\one)=0 \text{ and } k=1,\dots,J , 
\end{eqnarray*}
which then implies that the power method preconditioned by the AMLI method using 
polynomial $q(t)$ on all levels 
has a bounded convergence rate, as 
\begin{eqnarray*}
\rho\big( (I-\Pi_{\one})(I-B_{J}^{-1}A) \big)
\leq 
\frac{m^{2}(c_{g}-1)}{c_{g}(m^{2}-1)}
.
\end{eqnarray*}
For a matching on a hypercubic grid, as discussed above,  
the constant $c_{g}$ 
approaches $4$ asymptotically. 
Assume that the bound is given by $c_{g}=4$, 
then a uniform convergence rate can not be proved by Theorem \ref{thm_amli_w} 
since it requires that the two level spectrally equivalent constants on all levels 
must be 
less or equal to a common bound $c_{g}$ which is strictly less than $4$. 
This suggests us to find the best possible AMLI polynomials 
for the condition $c_{g}=4$, 
and analyze how the AMLI convergence rate relates to the number of levels.

\begin{remark}
An $1-1/\log n$ type convergence rate can also be proven for 
the AMLI methods where the coarse partitioning consists of paths of $m$ vertices 
where $m>2$. 
\end{remark}

\section{Numerical results}\label{numerics}

In the previous section, the 
convergence rate of two-level matching method was used to 
establish the convergence of the matching-based AMLI method. 
Here, a numerical implementation 
that is strictly a translation of this theoretical analysis is considered. 
Then, a simplified and more efficient variant of the method  
is developed and tested.  

To study the effectiveness of the algorithm and the sharpness of the theoretical estimates of its performance derived in the previous section, the method
is applied as a preconditioner to the Conjugate Gradient iteration.
In all tests, the stopping criteria for the PCG solver is set as a $10^{-10}$ reduction in 
the relative $A$ norm of the error.  The average convergence rate, $r_{a}$, and the convergence rates computed by the 
condition number estimates obtained from the Lanczos algorithm and the AMLI polynomial, denoted by $r_{e}$ and $r_{k}$, respectively, are reported.
To reduce the effects of randomness in the numerical results, 
for each combination of testing parameters, 
the PCG method is run for five 
right hand sides computed by random left hand sides, 
and the convergence estimate that represents the worst case is reported.

\subsection{An exact implementation of the AMLI method}

As a first test of the matching AMLI solver, 
it is applied to the graph Laplacian corresponding to 2- and 3-dimensional structured grids
on convex and non-convex domains.  
The coarsening is obtained by applying matching only in a single direction on each level
until the coarsest level is 1-dimensional, 
which is then solved using an LU factorization. 
The AMLI polynomial $q_{k}(t)$ on the $k$-th level is determined by 
the theoretically estimated condition number, 
given by the recursive formula \eqref{recursive-formula-zeta-k}. 
The system $Y_{k}^{T}A_{k}Y_{k}$ is solved exactly by an LU factorization 
on smaller grids or CG iteration down to $10^{-6 }$ relative residual 
on larger grids of the hierarchy. 

Such AMLI method, which is designed to 
have all assumptions in Theorem \ref{thm_amli_w} satisfied, 
is named ``ordinary AMLI method.''
The results are reported in Table \ref{table11} and \ref{table12}
and confirm that the actual convergence rate of the method, $r_{a}$, 
and the condition number estimate, $r_{e}$,
match the theoretical estimate, that is, they both grow in accordance with the estimate 
$r_{k}=(\sqrt{k}-1)/(\sqrt{k}+1)$, where 
$k$ grows logarithmically with respect to the grid size. 

\begin{table}
\captionsetup{width=0.23\textwidth}
\subfloat[Square domain with $n^{2}$ unknowns]
{
\begin{tabular}{|r|cccc|}
\hline
$n$ & $k$ & $r_{k}$ & $r_{e}$ & $r_{a}$ \\
\hline
128 & 13.9 & 0.58 & 0.56 & 0.54 \\
256 & 16.0 & 0.60 & 0.59 & 0.55 \\
512 & 18.0 & 0.62 & 0.58 & 0.57 \\
1024 & 20.1 & 0.64 & 0.60 & 0.60 \\
2048 & 22.1 & 0.65 & 0.61 & 0.61 \\
\hline
\end{tabular}
}
\qquad
\subfloat[L-shaped domain with $(3/4)n^{2}$ unknowns]
{
\begin{tabular}{|r|cccc|}
\hline
$n$ & $k$ & $r_{k}$ & $r_{e}$ & $r_{a}$ \\
\hline
128 & 13.9 & 0.58 & 0.56 & 0.56 \\
256 & 16.0 & 0.60 & 0.57 & 0.59 \\
512 & 18.0 & 0.62 & 0.57 & 0.58 \\
1024 & 20.1 & 0.64 & 0.59 & 0.59 \\
2048 & 22.1 & 0.65 & 0.60 & 0.61 \\
\hline
\end{tabular}
}
\captionsetup{width=0.8\textwidth}
\caption{Results of the AMLI preconditioned CG method 
applied to the graph Laplacians defined on 2D grids.}
\label{table11}
\end{table}
\begin{table}
\captionsetup{width=0.23\textwidth}
\subfloat[Cubic domain with $n^{3}$ unknowns]
{
\begin{tabular}{|r|cccc|}
\hline
$n$ & $k$ & $r_{k}$ & $r_{e}$ & $r_{a}$ \\
\hline
16 & 16.0 & 0.60 & 0.55 & 0.55 \\
32 & 20.1 & 0.64 & 0.59 & 0.59 \\
64 & 24.2 & 0.66 & 0.62 & 0.62 \\
128 & 28.2 & 0.68 & 0.64 & 0.64 \\
\hline
\end{tabular}
}
\qquad 
\subfloat[Fichera domain with $(7/8)n^{3}$ unknowns]
{
\begin{tabular}{|r|cccc|}
\hline
$n$ & $k$ & $r_{k}$ & $r_{e}$ & $r_{a}$ \\
\hline
16 & 16.0 & 0.60 & 0.55 & 0.54 \\
32 & 20.1 & 0.64 & 0.59 & 0.59 \\
64 & 24.2 & 0.66 & 0.62 & 0.62 \\
128 & 28.2 & 0.68 & 0.64 & 0.64 \\
\hline
\end{tabular}
}
\captionsetup{width=0.8\textwidth}
\caption{Results of the ordinary AMLI preconditioned CG method 
applied to the graph Laplacians defined on 3D grids.}
\label{table12}
\end{table}

\subsection{Modified AMLI solver for matching}

Next, a more practical variant of the matching AMLI preconditioner is developed.  
First, the exact $Y_{k}^{T}A_{k}Y_{k}$ solvers are replaced by 
Richardson iterations with weights computed using the
$\ell_{1}$ induced norm of these matrices, 
instead of the common choice of their largest eigenvalues. 

The lower order term $\ln J=\ln \log_{2} n$ in \eqref{recursive-formula-zeta-k} 
is also dropped,
since it is smaller than the term $2J$ in \eqref{recursive-formula-zeta-k} and 
is bounded by $4$ for $n=2^{50}$. 
Another modification to the scheme is the choice of the scaling $\sigma$ 
in Lemma \ref{lem_Ghat} away from $2$. 
Numerical results suggest that $\sigma=2-1/(2 \log_{2} N)$, 
where $N$ is the number of vertices on the graph, 
is usually a better scaling than the estimated bound $\sigma=2$ used in the analysis. 
We use this choice for the 
structured mesh problems and for the unstructured problems 
the scaling is computed through a numerical method.  

In table \ref{table21} and \ref{table22}, 
the convergence rate estimates of this approach applied 
to the same structured problems are reported. 
Although some of the assumptions of the theory are violated by
the method, 
its performance is similar to that of the approach considered in the previous tests.

\begin{remark}
A more practical strategy is to use a numerical method, 
e.g., 
a Lanczos algorithm with an AMLI preconditioner on the $k$-th level, 
to estimate the smallest eigenvalue of $B_{k}^{-1}A_{k}$, 
which is then used to determine the AMLI polynomial on the $k+1$-th level.  
Numerical tests show that 
such strategy results faster convergent AMLI methods than that defined through 
recursive formula \eqref{recursive-formula-zeta-k}, 
at a cost of more complicated setup phase. 
This strategy usually provide a significant speed up 
for 3- or higher dimensional structured problems. 
\end{remark}

\begin{table}
\captionsetup{width=0.23\textwidth}
\subfloat[Square domain with $n^{2}$ unknowns]
{
\begin{tabular}{|r|cccc|}
\hline
$n$ & $k$ & $r_{k}$ & $r_{e}$ & $r_{a}$ \\
\hline
128 & 13.0 & 0.57 & 0.59 & 0.54 \\
256 & 15.0 & 0.59 & 0.62 & 0.58 \\
512 & 17.0 & 0.61 & 0.64 & 0.59 \\
1024 & 19.0 & 0.63 & 0.65 & 0.63 \\
2048 & 21.0 & 0.64 & 0.65 & 0.65 \\
\hline
\end{tabular}
}
\qquad
\subfloat[L-shaped domain with $(3/4)n^{2}$ unknowns]
{
\begin{tabular}{|r|cccc|}
\hline
$n$ & $k$ & $r_{k}$ & $r_{e}$ & $r_{a}$ \\
\hline
128 & 13.0 & 0.57 & 0.58 & 0.56 \\
256 & 15.0 & 0.59 & 0.60 & 0.56 \\
512 & 17.0 & 0.61 & 0.62 & 0.57 \\
1024 & 19.0 & 0.63 & 0.64 & 0.62 \\
2048 & 21.0 & 0.64 & 0.69 & 0.67 \\
\hline 
\end{tabular}
}
\captionsetup{width=0.8\textwidth}
\caption{Results of the modified AMLI preconditioned CG method 
applied to the graph Laplacians defined on 2D grids.}
\label{table21}
\end{table}

\begin{table}
\captionsetup{width=0.23\textwidth}
\subfloat[Cubic domain with $n^{3}$ unknowns]
{
\begin{tabular}{|r|cccc|}
\hline
$n$ & $k$ & $r_{k}$ & $r_{e}$ & $r_{a}$ \\
\hline
16 & 15.0 & 0.59 & 0.50 & 0.42 \\
32 & 19.0 & 0.63 & 0.54 & 0.49 \\
64 & 23.0 & 0.65 & 0.57 & 0.52 \\
128 & 27.0 & 0.68 & 0.59 & 0.56 \\
\hline
\end{tabular}
}
\qquad
\subfloat[Fichera domain with $(7/8)n^{3}$ unknowns]
{
\begin{tabular}{|r|cccc|}
\hline
$n$ & $k$ & $r_{k}$ & $r_{e}$ & $r_{a}$ \\
\hline
16 & 15.0 & 0.59 & 0.49 & 0.49 \\
32 & 19.0 & 0.63 & 0.54 & 0.50 \\
64 & 23.0 & 0.65 & 0.57 & 0.56 \\
128 & 27.0 & 0.68 & 0.57 & 0.60 \\
\hline
\end{tabular}
}
\captionsetup{width=0.8\textwidth}
\caption{Results of the modified AMLI preconditioned CG method 
applied to the graph Laplacians defined on 3D grids.}
\label{table22}
\end{table}

\subsection{On unstructured grids}

Finally, tests of this AMLI preconditioned Conjugate gradient method 
applied to the graph Laplacian defined on
more general graphs, coming from unstructured meshes resulting from triangulations of 
a 2-dimensional grid on a square domain, 
or a 3-dimensional grid on a cubic domain, 
are considered. 
The unstructured grid is generated by 
perturbing grid points of a structured grid 
by a random vector of length $h/2$, 
where $h$ is the mesh size of the original structured grid, 
followed by a Delaunay triangulation. 
Then, a random matching is applied recursively to generate a multilevel hierarchy with 
$(\log_{2}N)/2$ levels. 
The 3-dimensional unstructured grids are generated in a similar way and 
the multilevel hierarchy is constructed accordingly 
by the random matching algorithm. 

The results of these tests are reported in Table \ref{table31} and \ref{table32}.
For the results on the left of these tables, 
the $Y_{k}^{T}A_{k}Y_{k}$ block of the two-level preconditioner is solved 
to high accuracy, 
which is practical 
since this operator is proven well conditioned even for unstructured grids.  
The recursive formula \eqref{tq(t)}
is used to derive the polynomials used in the AMLI cycles, 
and the scaling constants 
are computed using
\begin{eqnarray*}
\sigma_{k}=\max_{i\neq j}\frac{(P_{k}^{T}A_{k}P_{k})_{ij}}{(A_{k+1})_{ij}} , 
\end{eqnarray*} 
which ensures that the upper bound in \eqref{m-1/a-1} is always 1, 
which in turn guarantees that the AMLI method, 
as a preconditioner for the CG method, is always positive semi-definite. 
Because that the AMLI polynomials, constructed according to \eqref{tq(t)}, 
is negative when $t>1$. 
Assume that the scaling constant $\sigma_{k}$ is smaller than the value suggested above, 
then there exists a $\bv$ such that 
$$
(G_{k}^{\dagger}\bv, \bv) > (A_{k}^{\dagger}\bv, \bv) ,
$$
which makes it possible that 
$(B_{k}^{-1}\bv, \bv) > (A_{k}^{\dagger}\bv, \bv)$. 
Assume that happens, 
the matrix $B_{k}^{-1}q(A_{k}B_{k}^{-1})$ becomes indefinite 
which in turn makes $B_{k+1}$ indefinite. 

For the results on the right of Table \ref{table31} and \ref{table32},
the solve of the $Y_{k}^{T}A_{k}Y_{k}$ block is replaced by 
one Richardson iteration, 
and the AMLI polynomials are constructed based on \eqref{tq(t)} 
without the lower order term $\ln k$. 
The asymptotic convergence rates are again
close to the expected convergence rates obtained from the AMLI polynomials
Further, the actual convergence rates are usually better, 
especially for the method that uses more accurate solves for 
the $Y_{k}^{T}A_{k}Y_{k}$ blocks, as opposed to the one
that uses a single Richardson iteration.

\begin{table}
\subfloat[Ordinary AMLI]
{
\begin{tabular}{|r|cccc|}
\hline
$n$ & $k$ & $r_{k}$ & $r_{e}$ & $r_{a}$ \\
\hline
128 & 16.0 & 0.60 & 0.70 & 0.58 \\
256 & 18.0 & 0.62 & 0.72 & 0.54 \\
512 & 20.1 & 0.64 & 0.74 & 0.63 \\
1024 & 22.1 & 0.65 & 0.75 & 0.65 \\
2048 & 24.2 & 0.66 & 0.76 & 0.67 \\
\hline
\end{tabular}
}
\qquad
\subfloat[Modified AMLI]
{
\begin{tabular}{|r|cccc|}
\hline
$n$ & $k$ & $r_{k}$ & $r_{e}$ & $r_{a}$ \\
\hline
128 & 15.0 & 0.59 & 0.70 & 0.70 \\
256 & 17.0 & 0.61 & 0.71 & 0.70 \\
512 & 19.0 & 0.63 & 0.72 & 0.72 \\
1024 & 21.0 & 0.64 & 0.73 & 0.73 \\
2048 & 23.0 & 0.65 & 0.75 & 0.75 \\
\hline
\end{tabular}
}
\caption{Results of the CG method preconditioned by variants of the matching AMLI methods applied to the graph Laplacian defined on 2D unstructured grids of size $n^{2}$.}
\label{table31}
\end{table}

\begin{table}
\subfloat[Ordinary AMLI]
{
\begin{tabular}{|r|cccc|}
\hline
$n$ & $k$ & $r_{k}$ & $r_{e}$ & $r_{a}$ \\
\hline
16 & 18.0 & 0.62 & 0.65 & 0.48 \\
32 & 22.1 & 0.65 & 0.67 & 0.55 \\
64 & 26.2 & 0.67 & 0.70 & 0.62 \\
128 & 30.3 & 0.69 & 0.74 & 0.60 \\
\hline
\end{tabular}
}
\qquad
\subfloat[Modified AMLI]
{
\begin{tabular}{|r|cccc|}
\hline
$n$ & $k$ & $r_{k}$ & $r_{e}$ & $r_{a}$ \\
\hline
16 & 17.0 & 0.61 & 0.59 & 0.55 \\
32 & 21.0 & 0.64 & 0.63 & 0.58 \\
64 & 25.0 & 0.67 & 0.65 & 0.62 \\
128 & 29.0 & 0.69 & 0.67 & 0.65 \\
\hline
\end{tabular}
}
\caption{Results of the CG method preconditioned by 
variants of the matching AMLI methods applied to 
the graph Laplacian defined on 3D unstructured grids of size $n^{3}$.}
\label{table32}
\end{table}

\section{Conclusions}\label{conclusion}

An algebraic formula for estimating the convergence rate of an aggregation-based 
two level method is derived, 
and it is shown that the formula can be used to obtain sharp estimates 
of the convergence rates 
in the special case where matching is used. 
With the use of geometric information, a sharp bound
of the two-level method is derived. 
The nearly optimal convergence and complexity of 
the multilevel method that uses AMLI cycles is also established. 
The reported numerical  tests illustrate the sharpness of the theoretical estimates. 
Moreover all the theoretical results can be generalized to aggregates of general size and, hence, 
can be used to study an approach which combines aggressive aggregation with AMLI cycles, which 
should result in a fast and memory efficient solver for graph Laplacians.  Development and analysis of 
such a scheme and one that uses more general smoothers are subject of on-going research.


\providecommand{\noopsort}[1]{}

\end{document}